\renewcommand{\S}{\mathcal{S}}
\newcommand{\MS}{\mathcal{MS}}
\newcommand{\Sh}{\mathcal{S}h}
\newcommand{\B}{\mathcal{B}}
\newcommand{\F}{\mathbb{F}}
\newcommand{\chains}{\mathrm{C}_\bullet}
\newcommand{\cochains}{\mathrm{C}^\bullet}
\newcommand{\tensor}{\otimes}
\newcommand{\Hom}{\mathrm{Hom}}
\newcommand{\End}{\mathrm{End}}
\newcommand{\Mod}{\mathcal{M}od_{k}}
\newtheorem{theorem}{Theorem}
\newtheorem{lemma}[theorem]{Lemma}
\newtheorem{corollary}[theorem]{Corollary}
\theoremstyle{definition}
\newtheorem{definition}[theorem]{Definition}
\newtheorem{example}[theorem]{Example}
\newtheorem{remark}[theorem]{Remark}
\newtheorem{notation}[theorem]{Notation}
\newcommand*{\defeq}{\mathrel{\rlap{%
			\raisebox{0.3ex}{$\m@th\cdot$}}%
		\raisebox{-0.3ex}{$\m@th\cdot$}}%
	=}
\tikzset{myptr/.style={decoration={markings,mark=at position 1 with %
			{\arrow[scale=1.5,>=stealth]{>}}},postaction={decorate}}}
\newsavebox\preproduct
\newcommand{\product}{
	\usebox\preproduct}
\newsavebox\precoproduct
\newcommand{\coproduct}{
	\usebox\precoproduct}
\newsavebox\preboundary
\newcommand{\boundary}{
	\usebox\preboundary}
\newsavebox\precoboundary
\newsavebox\precounit
\newcommand{\counit}{
	\usebox\precounit}
\newsavebox\preidentity
\newsavebox\preunit
\newsavebox\preassociativity
\newsavebox\precoassociativity
\newsavebox\preinvolution
\newsavebox\preleftcounitality
\newcommand{\leftcounitality}{
	\usebox\preleftcounitality}
\newsavebox\prerightcounitality
\newcommand{\rightcounitality}{
	\usebox\prerightcounitality}
\newsavebox\preleftunitality
\newsavebox\prerightunitality
\newsavebox\preproductcounit
\newcommand{\productcounit}{
	\usebox\preproductcounit}
\newsavebox\preunitcoproduct
\newsavebox\preleibniz
\newsavebox\prebialgebra
\newsavebox\precommutativity
\begin{document}
	
	\title{A finitely presented $E_\infty$-prop I: algebraic context}
	\author[A. M. Medina-Mardones]{Anibal M. Medina-Mardones}
	\address{Department of Mathematics, University of Notre Dame}
	\email{amedinam@nd.edu}
	
	\begin{abstract}
		We introduce a finitely presented prop $\mathcal{S} = \{\mathcal{S}(n,m)\}$ in the category of differential graded modules whose associated operad $U(\mathcal{S})=\{\mathcal{S}(1,m)\}$ is a model for the $E_\infty$-operad. This finite presentation allows us to describe a natural $E_\infty$-coalgebra structure on the chains of any simplicial set in terms of only three maps: the Alexander-Whitney diagonal, the augmentation map, and an algebraic version of the join of simplices. The first appendix connects our construction to the Surjection operad of McClure-Smith and Berger-Fresse. The second establishes a duality between the join and AW maps for augmented and non-augmented simplicial sets. A follow up paper \cite{medina2018cellular} constructs a prop corresponding to $\S$ in the category of $CW$-complexes.
	\end{abstract}
	
	\maketitle
	
	\tableofcontents
	
	\section{Introduction}
	A purposeful construction of model for the $E_\infty$-operad is central in most contexts where commutativity up to coherent homotopies plays a role. In this paper, we work in the differential graded context and the property that we are interested in is the size of a presentation in terms of generators and relations. No finitely presented model of the $E_\infty$-operad can exist, but passing to a more general setting with multiple inputs and outputs allows us to finitely present a prop whose associated operad is a model for the $E_\infty$-operad. 
	
	We use this finitely presented prop to describe an $E_\infty$-structure on the chains of any simplicial set induced by only three operations: the Alexander-Whitney diagonal, the augmentation map, and an algebraic version of the join of simplices. Additionally, we describe the Surjection operad of McClure-Smith \cite{mcclure2003multivariable} and Berger-Fresse \cite{berger2004combinatorial} as the operad associated to a finitely presented prop $\MS$ with the same generators as $\S$ and five more relations. This description recovers the $E_\infty$-structure defined by these authors on the chains of simplicial sets as that induced by $\S$. 
	
	This is the first part of two papers. The second \cite{medina2018cellular} deals with a CW realization of the combinatorial results here. The operad associated to the prop corresponding to $\MS$ will be shown to be isormorphic to an Arc Operad \cite{kaufmann03arc, kaufmann09dimension}. This and the relation to the surjection operad links the current paper to Deligne's conjecture in the form \cite{mcclure2003multivariable, berger2004combinatorial,kaufmann07spinless}.	The combinatorial formulation, we present here is of independent interest providing a concise graphical context. Furthermore, we note that the coproduct we use corresponds, in a string topology interpretation \cite{tradler07string, kaufmann08frobenious}, to a degree 0 coproduct \cite{kaufmann2018detailed}. This is to be distinguished from the degree 1 coproduct that is more widely known as the Goresky-Hingston product \cite{goresky09loop, kaufmann2018detailed}. Here we show that the degree 0 coproduct has its own importance as it is the key ingredient to realizing the $E_\infty$-structure.
	
	The outline of this paper is as follows: in the second section, we review the material on operads and props needed for the rest of the paper. In the third section, we give the finite presentation of the prop $\S$ and compute its homology. In the fourth section, we construct a natural $\S$-bialgebra structure on the chains of standard simplices, based solely on the Alexander-Whitney diagonal, the augmentation map, and a differential graded join map. These $\S$-bialgebra structures induce a natural $U(\S)$-coalgebra structure on the chains of any simplicial set. Here $U(\S)$ is the canonical \mbox{$E_\infty$-operad} with $U(\S)(m)=\S(1,m)$ for all $m\geq 0$. In the first appendix, we realize the Surjection operad of McClure-Smith \cite{mcclure2003multivariable} and Berger-Fresse \cite{berger2004combinatorial} as the operad associated to a finitely presented prop $\MS$ which is a quotient of $\S$. We also describe how the $E_\infty$-structure defined by these authors on chains of simplicial sets is generated by the Alexander-Whitney, augmentation, and join maps. In the second appendix, we construct natural opposite $\S$-bialgebra structure on the chains of standard augmented simplices. 
	
	\subsection*{Acknowledgments} We would like to thank Bruno Vallette, Stephan Stolz, Ralph Cohen, Dennis Sullivan, Dev Sinha, Greg Friedman, Kathryn Hess, and Ralph Kaufmann for their insights, questions, and comments about this project. 
	
	\section{Preliminaries about operads and props}
	
	\subsection*{Conventions.} We will work in the symmetric monoidal category $(\Mod,\tensor,k)$ of differential homologically graded $k$-modules with $k$ a commutative ring. We will use the notation $\overline{n}$ for $\{1,\dots,n\}$ with $\overline{0}$ standing for the empty set.
	
	\subsection{$E_\infty$-operads}
	We say that an operad $\mathcal{O}$ is $\Sigma$\textbf{-free} if $\mathcal{O}(m)$ is a $k[\Sigma_m]$-free module for every $m$. A \mbox{$\Sigma$-\textbf{free resolution}} of an operad $\mathcal{O}$ is a morphism from a $\Sigma$-free operad to $\mathcal{O}$ inducing a homology isomorphism in each arity $m$. 
	
	For any $C\in\Mod$, we have two naturally associated operads given for any $m\geq0$ by $$\End(C)(m) = \Hom(C,C^{\tensor m})\text{\ \ and \ \ }\End^\mathrm{op}(C)(m) = \Hom(C^{\tensor m},C).$$
	For any $C\in\Mod$ there are two types of representations of an operad $\mathcal{O}$ on $C$. They are referred to as $\mathcal{O}$-\textbf{coalgebra} and $\mathcal{O}$-\textbf{algebra} structures and are respectively given by operad morphisms
	$$\mathcal{O}\to\End(C) \text{\ \ and \ }\mathcal{O}\to\End^\mathrm{op}(C).$$
	The operad $\End(k)$ is of particular importance, with $\End(k)$-(co)algebras defining usual (co)commutative, (co)associative and (co)unital (co)algebras. 
	
	Following May, for example \cite{kriz1995operads}, an operad $\mathcal{O}$ is called an $E_\infty$-\textbf{operad} if it is a $\Sigma$-free resolution of $\End(k)$ and $\mathcal{O}(0)=k$.
	
	\subsection{Props} A \textbf{prop} is a strict symmetric monoidal category $\mathcal{P} = (\mathcal{P}, \odot, 0)$ enriched in $\Mod$ generated by a single object. For a prop $\mathcal{P}$ with generator $p$ we denote $Mor_\mathcal{P}(p^{\odot n}, p^{\odot m})$ by $\mathcal{P}(n,m)$ and notice, induced from the symmetry of the monoidal structure, the commuting right and left actions of $\Sigma_n$ and $\Sigma_m$ respectively. Therefore, we think of the data of a prop as a $\Sigma$-\textbf{bimodule}, i.e., a collection $\mathcal{P} = \big\{\mathcal{P}(n,m)\big\}_{n,m\geq0}$ of differential graded $k$-modules with commuting actions of $\Sigma_n$ and $\Sigma_m$ together with three types of maps:
	\begin{gather*}
		\circ_h : \mathcal{P}(n_1,m_1) \tensor \cdots \tensor \mathcal{P}(n_s,m_s) \to \mathcal{P}(n_1+\cdots+n_s,m_1+\cdots+m_s) \\
		\circ_v : \mathcal{P}(r,m) \tensor \mathcal{P}(n,r) \to \mathcal{P}(n,m) \\
		\eta : k \to \mathcal{P}(n,n).
	\end{gather*}
	These types of maps are referred to respectively as horizontal compositions, vertical compositions, and units. They come from the monoidal product, the categorical composition of $\mathcal{P}$, and its identity morphisms. 
	
	For any $C\in\Mod$ we have two naturally associated props given for any $n,m\geq0$ by 
	$$\End(C)(n,m) = \Hom(C^{\tensor n},C^{\tensor m}) \text{\ \ and \ \ }\End^\mathrm{op}(C)(n,m) = \Hom(C^{\tensor m},C^{\tensor n}).$$ 
	There are two types of representations of a prop $\mathcal{P}$ on $C$, they are referred to as $\mathcal{P}$-\textbf{bialgebra} and \textbf{opposite} $\mathcal{P}$-\textbf{bialgebra} structures and are respectively given by prop morphisms 
	$$\mathcal{P}\to\End(C) \text{\ \ and \ }\mathcal{P}\to\End^\mathrm{op}(C).$$
	Let $U$ be the functor from the category of props to that of operads given by naturally inducing from a prop $\mathcal{P}$ an operad structure on the $\Sigma$-module $U(\mathcal{P})=\{\mathcal{P}(1,m)\}_{m\geq0}$. Notice that a $\mathcal{P}$-bialgebra (resp. opposite $\mathcal{P}$-bialgebra) structure on $C$ induces a $U(\mathcal{P})$-coalgebra (resp. $U(\mathcal{P})$-algebra) structure on $C$.
	
	Following Boardman and Vogt \cite{boardman2006homotopy}, a prop $\mathcal{P}$ is called an $E_\infty$-\textbf{prop} if $U(\mathcal{P})$ is an $E_\infty$-operad.
	
	\subsection{Free props}	As described, for example, in \cite{markl2008operads}, the \textbf{free prop} generated by a $\Sigma$-bimodule is constructed using open directed graphs with no directed loops that are enriched with a labeling described next. We think of each directed edge as built from two compatibly directed half-edges. For each vertex $v$ of a directed graph $G$, we have the sets $in(v)$ and $out(v)$ of half-edges that are respectively incoming to and outgoing from $v$. Half-edges that do not belong to $in(v)$ or $out(v)$ for any $v$ are divided into the disjoint sets $in(G)$ and $out(G)$ of incoming and outgoing external half-edges. Recall that $\overline{n} = \{1,\dots,n\}$. Denoting the cardinality of a finite set $S$ by $|S|$, the labeling is given by bijections  
	$$ \overline{|in(G)|}\to in(G)\hspace*{1cm}\overline{|out(G)|}\to out(G)$$
	and
	$$ \overline{|in(v)|}\to in(v)\hspace*{1cm}\overline{|out(v)|}\to out(v)$$ 
	for every vertex $v$. We refer to such labeled directed graphs with no directed loops as $(n,m)$\textbf{-graph}. We consider the right action of $\Sigma_n$ and the left action of $\Sigma_m$ on a $(n,m)$-graph given respectively by permuting the labels of $in(G)$ and $out(G)$. 
	
	A $k$-module basis for the $(n,m)$-part of the $\Sigma$-bimodule underlying the free prop generated by a $\Sigma$-bimodule $\B$ is given by all isomorphism classes of $(n,m)$-graphs which are $\B(n,m)$-decorated in the following way: to every vertex $v$ of one such $G$, one assigns an element $p \in \B(|in(v)|, |out(v)|)$ and introduces the equivalence relations:  
	\begin{center}
		\boxed{\begin{tikzpicture}[scale=.6]
			\node at (0,2.1) {$1$}; \draw[->] (0,1.7) -- (0,1);
			\node at (.8,2.1) {$\dots$};
			\node at (2,2.1) {$\scriptstyle \overline{|in(v)|}$}; \draw[->] (2,1.7) -- (2,1);
			
			\draw (2.4,.5) arc (0:360:1.4cm and .5cm); \node at (1, .48) {$a\,p+q$};
			
			\node at (0,-1.1) {$1$}; \draw[<-](0,-.7) -- (0,0);
			\node at (.8,-1.1) {$\dots$};
			\node at (2,-1.1) {$\scriptstyle \overline{|out(v)|}$}; \draw[<-] (2,-.7) -- (2,0);
			
			\node at (3.6,.5) {$\sim$};  
			\end{tikzpicture}
			\hspace*{.2cm}
			\begin{tikzpicture}[scale=.6]
			\node at (-.8,.5) {$a$};
			\node at (0,2.1) {$1$}; \draw[->] (0,1.7) -- (0,1);
			\node at (.8,2.1) {$\dots$};
			\node at (2,2.1) {$\scriptstyle \overline{|in(v)|}$}; \draw[->] (2,1.7) -- (2,1);
			
			\draw (2.4,.5) arc (0:360:1.4cm and .5cm); \node at (1, .4) {$p$};
			
			\node at (0,-1.1) {$1$}; \draw[<-](0,-.7) -- (0,0);
			\node at (.8,-1.1) {$\dots$};
			\node at (2,-1.1) {$\scriptstyle \overline{|out(v)|}$}; \draw[<-] (2,-.7) -- (2,0);
			\end{tikzpicture}
			\hspace*{-.01cm}
			\begin{tikzpicture}[scale=.6]
			\node at (-.9,.5) {$+$};
			\node at (0,2.1) {$1$}; \draw[->] (0,1.7) -- (0,1);
			\node at (.8,2.1) {$\dots$};
			\node at (2,2.1) {$\scriptstyle \overline{|in(v)|}$}; \draw[->] (2,1.7) -- (2,1);
			
			\draw (2.4,.5) arc (0:360:1.4cm and .5cm); \node at (1, .4) {$q$};
			
			\node at (0,-1.1) {$1$}; \draw[<-](0,-.7) -- (0,0);
			\node at (.8,-1.1) {$\dots$};
			\node at (2,-1.1) {$\scriptstyle \overline{|out(v)|}$}; \draw[<-] (2,-.7) -- (2,0);
			\end{tikzpicture}
		}
		
		\boxed{\begin{tikzpicture}[scale=.6]
			\node at (0,2.1) {$1$}; \draw[->] (0,1.7) -- (0,1);
			\node at (.8,2.1) {$\dots$};
			\node at (2,2.1) {$\scriptstyle \overline{|in(v)|}$}; \draw[->] (2,1.7) -- (2,1);
			
			\draw (2.4,.5) arc (0:360:1.4cm and .5cm); \node at (1, .45) {$\tau^{-1} p\,\sigma^{-1}$};
			
			\node at (0,-1.1) {$1$}; \draw[<-](0,-.7) -- (0,0);
			\node at (.8,-1.1) {$\dots$};
			\node at (2,-1.1) {$\scriptstyle \overline{|out(v)|}$}; \draw[<-] (2,-.7) -- (2,0);
			
			\node at (3.5,.5) {$\sim$}; 
			\end{tikzpicture}\; 
			\begin{tikzpicture}[scale=.6]
			\node at (0,2.1) {$\scriptstyle \sigma(1)$}; \draw[->] (0,1.7) -- (0,1);
			\node at (1,2.1) {$\dots$};
			\node at (2.4,2.1) {$\scriptstyle \sigma(\overline{|in(v)|})$}; \draw[->] (2,1.7) -- (2,1);
			
			\draw (2.4,.5) arc (0:360:1.4cm and .5cm); \node at (1, .45) {$p$};
			
			\node at (0,-1.1) {$\scriptstyle \tau(1)$}; \draw[<-](0,-.7) -- (0,0);
			\node at (1,-1.1) {$\dots$};
			\node at (2.5,-1.1) {$\scriptstyle \tau(\overline{|out(v)|})$}; \draw[<-] (2,-.7) -- (2,0);
			\end{tikzpicture}
		} \qquad
		\boxed{\begin{tikzpicture}[scale=.6]
			\node at (0,2.1) {$1$}; \draw[->] (0,1.7) -- (0,1);
			\node at (.8,2.1) {$\dots$};
			\node at (2,2.1) {$\scriptstyle \overline{|in(v)|}$}; \draw[->] (2,1.7) -- (2,1);
			
			\draw (2.4,.5) arc (0:360:1.4cm and .5cm); \node at (1, .45) {$\eta(1)$};
			
			\node at (0,-1.1) {$1$}; \draw[<-](0,-.7) -- (0,0);
			\node at (.8,-1.1) {$\dots$};
			\node at (2,-1.1) {$\scriptstyle \overline{|out(v)|}$}; \draw[<-] (2,-.7) -- (2,0);
			
			\node at (3.5,.5) {$\sim$}; 
			\end{tikzpicture}\; 
			\begin{tikzpicture}[scale=.6]
			\node at (0,2.1) {$1$}; 
			\draw[->] (0,1.7) -- (0,-.7);
			\node at (.8,2.1) {$\dots$};
			\node at (2,2.1) {$\scriptstyle \overline{|in(v)|}$};
			\draw[->] (2,1.7) -- (2,-.7);
			
			\node at (0,-1.1) {$1$}; 
			\node at (.8,-1.1) {$\dots$};
			\node at (2,-1.1) {$\scriptstyle \overline{|out(v)|}$}; 
			\end{tikzpicture}
		}
	\end{center}
	where $p,q\in \B(n,m)$, $\sigma\in\Sigma_n$, $\tau\in\Sigma_m$ and $a\in k$. Therefore, we have a description of the form $$F(\B)(n,m)=\left(\bigoplus_{G}\bigotimes_{v}\B\big(|in(v)|,|out(v)|\big)\right)\bigg/\sim$$
	that endows $F(\B)(n,m)$ the structure of a differential grade $k$-module. The vertical compositions are given by grafting, whereas the horizontal composition is given by unions of graphs with the corresponding relabeling of external half-edges.  
	
	\subsection{Presentations} We will describe what we mean by a \textbf{presentation} $(G,\partial, R)$ of a prop. The first piece of data is a collection $G=\{G(n,m)_d\}$ of sets thought of as generators of biarity $(n,m)$ and homological degree $d$. Consider the free $\Sigma$-bimodule over the category of graded $k$-modules having basis $G$, i.e., in biarity $(n,m)$ the $k[\Sigma_n^{\mathrm{op}}\times\Sigma_m]$~-~module in homological degree $d$ is free with basis $G(n,m)_d$. The free prop over the category of graded $k$-modules generated by this $\Sigma$-bimodule is denoted $F(G)$. The second piece of data is a map $\partial: G\to F(G)$ of degree $-1$, thought of as the boundary of the generators. Extending this map as a derivation we promote $F(G)$ to a prop over $\Mod$. The third piece of data is a collection $R=\{R(n,m)_d\}$ of subsets of $F(G)(n,m)$ thought of as relations. Denote by $\langle R\rangle$ the (differential graded) prop ideal generated by $R$ in $F(G)$. We say that the triple $(G,\partial,R)$ is a presentation of the prop $F(G)/\langle R\rangle$.
	
	\subsection{Immersion convention} If a graph immersed in the plane appears representing an $(n,m)$-graph, the convention we  follow is to give the direction from top to bottom and the labeling from left to right. For example:
	
	\begin{center}
		\boxed{\begin{tikzpicture}[scale=.7]
			\draw (1,3.7) to (1,3); 
			
			\draw (1,3) to [out=205, in=90] (0,0);
			
			\draw [shorten >= 0cm] (.6,2.73) to [out=-100, in=90] (2,0);
			
			\draw [shorten >= .15cm] (1,3) to [out=-25, in=30, distance=1.1cm] (1,1.5);
			\draw [shorten <= .1cm] (1,1.5) to [out=210, in=20] (0,1);
			
			\node at (1,3.9){};
			\node at (0,-.32){};
			\node at (2,-.32){};
			
			\node at (3,1.5){$\sim$\ \ \ };
			\end{tikzpicture}
			\begin{tikzpicture}[scale=.7]
			\draw (1,3.7) to (1,3); 
			
			\draw [->] (1,3) to [out=205, in=90] (0,0);
			
			\draw [shorten >= 0cm,->] (.6,2.73) to [out=-100, in=90] (2,0);
			
			\draw [shorten >= .15cm] (1,3) to [out=-25, in=30, distance=1.1cm] (1,1.5);
			\draw [shorten <= .1cm] (1,1.5) to [out=210, in=20] (0,1);
			
			\node at (1,3.9){$\scriptstyle 1$};
			
			\node at (.7,3){$\scriptstyle 1$};
			\node at (1.35,3){$\scriptstyle 2$};
			
			\node at (.1,2.3){$\scriptstyle 1$};
			\node at (.8,2.3){$\scriptstyle 2$};
			
			\node at (-.15,1.3){$\scriptstyle 1$};
			\node at (.3,1.3){$\scriptstyle 2$};
			
			\node at (0,-.3){$\scriptstyle 1$};
			\node at (2,-.3){$\scriptstyle 2$};
			\end{tikzpicture}}
	\end{center}
	
	\section{The prop $\S$ }
	In this section we define the central object of this note: the prop $\S$. We give a finite presentation of $\S$ and show it is an $E_\infty$-prop.
	
	\subsection{The definition of $\S$}
	
	\begin{definition}\label{Prop S}
		Let $\S$ be the prop generated by 
		$$\counit\in\S(1,0)_0\hspace*{.6cm}\coproduct\in\S(1,2)_0\hspace*{.6cm}\product\in\S(2,1)_1$$ 
		with differential $$\partial\ \counit=0\hspace*{.6cm}\partial\ \coproduct=0\hspace*{.6cm}\partial\ \product=\ \boundary$$
		and restricted by the relations $$\productcounit\hspace*{.6cm}\leftcounitality\hspace*{.6cm}\rightcounitality\,.$$ 
	\end{definition}
	
	\subsection{The homology of $\S$}	
	\begin{lemma}
		Let 
		\begin{equation*}	
			\overline{\End}(k)(n,m)=
			\begin{cases} 
				\End(k)(n,m) & \text{ if } n>0 \\
				\hspace*{1cm}0 & \text{ if } n=0.
			\end{cases}
		\end{equation*}
		The map $\S\to\overline{\End}(k)$ defined on generators by 
		$$\ \counit\ \mapsto(1\mapsto1)\in\Hom(k,k^{\tensor0})$$ 
		$$\coproduct\mapsto(1\mapsto 1\tensor1)\in\Hom(k,k^{\tensor2})$$ 
		$$\product\mapsto0\in\Hom(k^{\tensor2},k)$$
		induces a homology isomorphism in each biarity.
	\end{lemma}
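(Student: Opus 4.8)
The plan is to understand $\S(n,m)$ explicitly enough to compute its homology, and then check the map is a quasi-isomorphism by a direct comparison. First I would analyze the structure of the free prop $F(G)$ on the three generators modulo the relations. The key observation is that the coproduct $\coproduct$, the counit $\counit$, and the relations $\productcounit$, $\leftcounitality$, $\rightcounitality$ are exactly those defining a coassociative counital coalgebra structure — wait, they are not imposing coassociativity, so I must be careful: the relations imposed are only the three displayed ones (counit killed by product, and the two counitality relations for the coproduct). So $\S$ is \emph{not} simply built from a cocommutative coalgebra; rather it is a free-ish prop with these local simplifications. I would first set up a normal form / spanning set for $\S(n,m)$ in each biarity using the relations to reduce graphs, tracking the homological degree contributed by each copy of $\product$ (which has degree $1$ and nonzero boundary).

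The crucial reduction is the differential: since $\partial\,\product = \boundary$ (two parallel strands with a ``transposition-like'' defect, i.e. the difference measuring non-strict-commutativity), each occurrence of $\product$ in a decorated graph, upon applying $\partial$, either stays put on other vertices or gets replaced by the degree-$0$ element $\boundary$. This is the mechanism by which higher homotopies collapse in homology. I would filter $F(G)/\langle R\rangle$ by the number of $\product$-vertices, or better, set up the computation so that the associated graded differential only acts within a single $\product$ and replaces it by $\boundary$; then a spectral sequence or an explicit contracting homotopy shows that in homology every graph with a $\product$-vertex either vanishes or equals one with that vertex resolved. Iterating, the homology is spanned by graphs with \emph{no} $\product$-vertices — i.e., decorated only by $\coproduct$ and $\counit$ — which, modulo the relations, are precisely the morphisms in the prop generated by a cocommutative counital... no: again without coassociativity imposed, the $\coproduct$-only sub-prop is the free prop on a non-coassociative coproduct and a counit with counitality. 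But here is where $n>0$ enters: a graph with only $\coproduct$ and $\counit$ vertices and $n$ inputs, $m$ outputs, is a forest of $n$ rooted trees (one per input) with $m$ leaves total, and the counitality relations let us prune unary vertices, while the $\product$-collapsing in homology must additionally force coassociativity and cocommutativity up to the boundary of $\product$-terms — so $H_\bullet\S(n,m)$ collapses to the $n$-fold tensor power of the cofree-on-one-generator, which is one-dimensional in degree $0$ when $n>0$, matching $\overline{\End}(k)(n,m) = \Hom(k^{\tensor n}, k^{\tensor m}) \cong k$ for $n>0$ and $0$ for $n=0$ (the latter because $\counit$-relations and the absence of any $\S(0,m)$ nontrivial class, forced by $\productcounit$, kill everything when $n=0$).

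Concretely, the steps in order: (1) describe a $k$-basis of $\S(n,m)$ by $\B$-decorated $(n,m)$-graphs in normal form after applying the three relations; (2) identify the subcomplex/quotient structure coming from counting $\product$-vertices and write down the induced differential; (3) show via an explicit homotopy that killing a $\product$-vertex (using $\partial\,\product = \boundary$) is a quasi-isomorphism onto the subcomplex of $\product$-free graphs \emph{after} further imposing the relations that $\boundary = 0$ forces in homology, namely coassociativity and cocommutativity of $\coproduct$; (4) conclude $H_\bullet\S(n,m)$ is concentrated in degree $0$, one-dimensional for $n>0$, zero for $n=0$; (5) verify the stated map sends the normal-form degree-$0$ generator to the canonical generator $1 \mapsto 1^{\tensor m}$ of $\Hom(k^{\tensor n}, k^{\tensor m})$ and kills $\product$, hence is an isomorphism on $H_0$ and trivially on $H_d$ for $d \neq 0$.

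The main obstacle I expect is step (3): carefully constructing the contracting homotopy that resolves $\product$-vertices and checking it descends through all the prop relations and the Leibniz rule for $\partial$ on decorated graphs — in particular making sure the sign bookkeeping and the interaction between multiple $\product$-vertices (a graph can have many, composed both horizontally and vertically) is handled coherently, rather than just treating one $\product$ in isolation. A clean way to organize this may be to exhibit $\S$ as an iterated ``Koszul-type'' or bar-type complex so that the homology computation reduces to a known acyclicity statement, but absent that, an explicit filtration by number of $\product$-vertices with an analysis of the $E_1$-page is the fallback, and proving the $E_1$ differential is as claimed (so that $E_2 = E_\infty$ is concentrated as stated) is the technical heart.
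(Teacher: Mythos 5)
There is a genuine gap, and it sits exactly where you locate the ``technical heart.'' Your proposed filtration by the number of $\product$-vertices cannot do any work: $\product$ is the only generator of nonzero homological degree, so the number of $\product$-vertices in a decorated graph \emph{is} its homological degree, and the filtration you describe is just the filtration by degree. Its associated graded has zero differential, the first nontrivial differential of the spectral sequence is the full differential of $\S(n,m)$, and the ``$E_1$-page analysis'' you defer to is the original problem restated. You also never produce the contracting homotopy, and you offer no argument that the higher homology vanishes: knowing that $H_0$ is spanned by $\product$-free graphs modulo coassociativity and cocommutativity says nothing about $H_{>0}$. A smaller but real error: $\boundary$ is not a ``transposition-like defect'' measuring noncommutativity; it is $\counit\tensor\mathrm{id}-\mathrm{id}\tensor\counit$, the difference of the two projections $k^{\tensor2}\to k$ obtained by capping one input of the join.

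The missing idea is to induct on the number of \emph{outputs} rather than trying to resolve $\product$-vertices in place. The paper exhibits an explicit chain homotopy equivalence $\S(n,m)\simeq\S(n,m+1)$ for $n>0$: the map $i$ splits a new first output off the first input strand using $\coproduct$, the map $r$ caps the first output with $\counit$, counitality gives $r\circ i=\mathrm{id}$, and the homotopy between $i\circ r$ and the identity is given by joining the first output back up to the first input with $\product$ --- the identity $\partial\ \product=\ \boundary$ produces exactly the two terms $i\circ r$ and $\mathrm{id}$ (plus the term accounting for $\partial G$). This reduces everything to $\S(n,0)$, which one checks directly is free on the single graph of $n$ capped strands, concentrated in degree $0$: following any terminal strand, the relation $\productcounit$ kills graphs containing a $\product$ and counitality prunes each $\coproduct$. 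Compatibility of $r$ with the stated map to $\overline{\End}(k)$ then closes the induction. To salvage your approach you would need a genuinely different filtration, or an explicit normal form for $\S(n,m)$ in all degrees together with a vanishing argument in positive degrees; neither is sketched in your proposal.
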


	\begin{proof} 			
		For $n=0$, $\S(0,m)=0=\overline{\End}(k)(0,m)$. For $n>0$ and $m\geq0$ we start by showing the complexes $\S(n,m)$ and $\S(n,m+1)$ are chain homotopy equivalent. Consider the collection of maps $\{i:\S(n,m)\to\S(n,m+1)\}$ described by the following diagram\\
		\begin{equation*}
		\boxed{\begin{tikzpicture}[scale=.6]
			\node at (0,2) {$1$}; \draw (0,1.7) -- (0,1);
			\node at (1,2) {$\dots$};
			\node at (2,2) {$n$}; \draw (2,1.7) -- (2,.9);
			
			\draw [dashed] (0,0) rectangle (2,1); \node at (1, .5) {$G$};
			
			\node at (0,-1) {$1$}; \draw (0,-.7) -- (0,0);
			\node at (1,-1) {$\dots$};
			\node at (2,-1) {$m$}; \draw (2,-.7) -- (2,0);
			
			\node at (3,.75) {$i$}; \draw [|->] (2.5,.5) -- (3.5,.5);
			\end{tikzpicture}
			\begin{tikzpicture}[scale=.6]
			\node at (0,2) {$1$}; \draw (0,1.7) -- (0,1);
			\node at (1,2) {$\dots$};
			\node at (2,2) {$n$}; \draw (2,1.7) -- (2,.9);
			
			\draw [dashed] (0,0) rectangle (2,1); \node at (1, .5) {$G$};
			
			\node at (-.5,-1) {$1$}; \draw (-.5,-.7)  to[out=90,in=-140]  (0,1.4);
			\node at (0,-1) {$2$}; \draw (0,-.7) -- (0,0);
			\node at (.75,-1) {$\dots$};
			\node at (2,-1) {$m+1$}; \draw (2,-.7) -- (2,0);
			\end{tikzpicture}}
		\end{equation*}\\
		Consider also the collection of maps $\{r:\S(n,m+1)\to\S(n,m)\}$ described by\\
		\begin{equation*}
		\boxed{\begin{tikzpicture}[scale=.6]
			\node at (0,2) {$1$}; \draw (0,1.7) -- (0,1);
			\node at (1,2) {$\dots$};
			\node at (2,2) {$n$}; \draw (2,1.7) -- (2,.9);
			
			\draw [dashed] (0,0) rectangle (2,1); \node at (1, .5) {$G$};
			
			\draw (0,-.7) -- (0,0);
			\node at (0,-1) {$1$};
			\node at (.5,-1) {$2$}; \draw (.5,-.7) -- (.5,0);
			\node at (1,-1) {$\cdot\!\cdot\!\cdot$};
			\node at (2.1,-1) {$m+1$}; \draw (2,-.7) -- (2,0);
			
			\node at (3,.75) {$r$}; \draw [|->] (2.5,.5) -- (3.5,.5);
			\end{tikzpicture}
			\ 
			\begin{tikzpicture}[scale=.6]
			\node at (0,2) {$1$}; \draw (0,1.7) -- (0,1);
			\node at (1,2) {$\dots$};
			\node at (2,2) {$n$}; \draw (2,1.7) -- (2,.9);
			
			\draw [dashed] (0,0) rectangle (2,1); \node at (1, .5) {$G$};
			
			\node at(0,-.7) {$\bullet$}; \draw (0,-.7) -- (0,0);
			\node at (.5,-1) {$1$}; \draw (.5,-.7) -- (.5,0);
			\node at (1.25,-1) {$\dots$};
			\node at (2,-1) {$m$}; \draw (2,-.7) -- (2,0);
			\end{tikzpicture}}
		\end{equation*}\\
		The diagram below shows that $r\circ i$ is the identity\\
		\begin{equation*}
		\boxed{\begin{tikzpicture}[scale=.6]
			\node at (0,2) {$1$}; \draw (0,1.7) -- (0,1);
			\node at (1,2) {$\dots$};
			\node at (2,2) {$n$}; \draw (2,1.7) -- (2,.9);
			
			\draw [dashed] (0,0) rectangle (2,1); \node at (1, .5) {$G$};
			
			\node at (0,-1) {$1$}; \draw (0,-.7) -- (0,0);
			\node at (1,-1) {$\dots$};
			\node at (2,-1) {$m$}; \draw (2,-.7) -- (2,0);
			
			\node at (3,.75) {$i$}; \draw [|->] (2.5,.5) -- (3.5,.5);
			\end{tikzpicture}
			\begin{tikzpicture}[scale=.6]
			\node at (0,2) {$1$}; \draw (0,1.7) -- (0,1);
			\node at (1,2) {$\dots$};
			\node at (2,2) {$n$}; \draw (2,1.7) -- (2,.9);
			
			\draw [dashed] (0,0) rectangle (2,1); \node at (1, .5) {$G$};
			
			\node at (-.5,-1) {$1$}; \draw (-.5,-.7)  to[out=90,in=-140]  (0,1.4);
			\node at (0,-1) {$2$}; \draw (0,-.7) -- (0,0);
			\node at (.75,-1) {$\dots$};
			\node at (2,-1) {$m+1$}; \draw (2,-.7) -- (2,0);
			
			\node at (3,.75) {$r$}; \draw [|->] (2.5,.5) -- (3.5,.5);
			\end{tikzpicture}
			\begin{tikzpicture}[scale=.6]
			\node at (0,2) {$1$}; \draw (0,1.7) -- (0,1);
			\node at (1,2) {$\dots$};
			\node at (2,2) {$n$}; \draw (2,1.7) -- (2,.9);
			
			\draw [dashed] (0,0) rectangle (2,1); \node at (1, .5) {$G$};
			
			\node at (-.5,-.7) {$\bullet$}; \draw (-.5,-.7)  to[out=90,in=-140]  (0,1.4);
			\node at (0,-1) {$1$}; \draw (0,-.7) -- (0,0);
			\node at (1,-1) {$\dots$};
			\node at (2,-1) {$m$}; \draw (2,-.7) -- (2,0);
			\node at (2.7,.5) {$=$}; 
			\end{tikzpicture}
			\! 
			\begin{tikzpicture}[scale=.6]
			\node at (0,2) {$1$}; \draw (0,1.7) -- (0,1);
			\node at (1,2) {$\dots$};
			\node at (2,2) {$n$}; \draw (2,1.7) -- (2,.9);
			
			\draw [dashed] (0,0) rectangle (2,1); \node at (1, .5) {$G$};
			
			\node at (0,-1) {$1$}; \draw (0,-.7) -- (0,0);
			\node at (1,-1) {$\dots$};
			\node at (2,-1) {$m$}; \draw (2,-.7) -- (2,0);
			\end{tikzpicture}}
		\end{equation*}\\
		Let us compute diagramatically the composition $i\circ r$\\
		\begin{equation*}
		\boxed{\begin{tikzpicture}[scale=.6]
			\node at (0,2) {$1$}; \draw (0,1.7) -- (0,1);
			\node at (1,2) {$\dots$};
			\node at (2,2) {$n$}; \draw (2,1.7) -- (2,.9);
			
			\draw [dashed] (0,0) rectangle (2,1); \node at (1, .5) {$G$};
			
			\draw (0,-.7) -- (0,0);
			\node at (0,-1) {$1$};
			\node at (.5,-1) {$2$}; \draw (.5,-.7) -- (.5,0);
			\node at (1.25,-1) {$\dots$};
			\node at (2,-1) {$m$}; \draw (2,-.7) -- (2,0);
			
			\node at (3,.75) {$r$}; \draw [|->] (2.5,.5) -- (3.5,.5);
			\end{tikzpicture}
			\   
			\begin{tikzpicture}[scale=.6]
			\node at (0,2) {$1$}; \draw (0,1.7) -- (0,1);
			\node at (1,2) {$\dots$};
			\node at (2,2) {$n$}; \draw (2,1.7) -- (2,.9);
			
			\draw [dashed] (0,0) rectangle (2,1); \node at (1, .5) {$G$};
			
			\node at(0,-.7) {$\bullet$}; \draw (0,-.7) -- (0,0);
			\node at (.5,-1) {$1$}; \draw (.5,-.7) -- (.5,0);
			\node at (.9,-1) {$\cdot\!\cdot\!\cdot$};
			\node at (2,-1) {$m-1$}; \draw (2,-.7) -- (2,0);
			
			\node at (3,.75) {$i$}; \draw [|->] (2.5,.5) -- (3.5,.5);
			\end{tikzpicture}
			\begin{tikzpicture}[scale=.6]
			\node at (0,2) {$1$}; \draw (0,1.7) -- (0,1);
			\node at (1,2) {$\dots$};
			\node at (2,2) {$n$}; \draw (2,1.7) -- (2,.9);
			
			\draw [dashed] (0,0) rectangle (2,1); \node at (1, .5) {$G$};
			
			\node at (-.5,-1) {$1$}; \draw (-.5,-.7)  to[out=90,in=-140]  (0,1.4);
			\node at (0,-.7) {$\bullet$}; \draw (0,-.7) -- (0,0);
			\node at (.5,-1) {$2$}; \draw (.5,-.7) -- (.5,0);
			\node at (1.25,-1) {$\dots$};
			\node at (2,-1) {$m$}; \draw (2,-.7) -- (2,0);
			\end{tikzpicture}}
		\end{equation*}\\
		Define the collection of homotopies $\{H:\S(n,m)_\bullet\to\S(n,m)_{\bullet+1}\}$ by\\
		\begin{equation*}
		\boxed{\begin{tikzpicture}[scale=.6]
			\node at (0,2) {$1$}; \draw (0,1.7) -- (0,1);
			\node at (1,2) {$\dots$};
			\node at (2,2) {$n$}; \draw (2,1.7) -- (2,.9);
			
			\draw [dashed] (0,0) rectangle (2,1); \node at (1, .5) {$G$};
			
			\node at (0,-1) {$1$}; \draw (0,-.7) -- (0,0);
			\node at (1,-1) {$\dots$};
			\node at (2,-1) {$m$}; \draw (2,-.7) -- (2,0);
			
			\node at (3,.75) {$H$}; \draw [|->] (2.5,.5) -- (3.5,.5);
			\end{tikzpicture}
			\  
			\begin{tikzpicture}[scale=.6]
			\node at (0,2) {$1$}; \draw (0,1.7) -- (0,1);
			\node at (1,2) {$\dots$};
			\node at (2,2) {$n$}; \draw (2,1.7) -- (2,.9);
			
			\draw [dashed] (0,0) rectangle (2,1); \node at (1, .5) {$G$};
			
			\draw (0,-.4)  to[out=140,in=-140]  (0,1.4);
			\node at (0,-1) {$1$}; \draw (0,-.7) -- (0,0);
			\node at (1,-1) {$\dots$};
			\node at (2,-1) {$m$}; \draw (2,-.7) -- (2,0);
			\end{tikzpicture}}	
		\end{equation*}\\
		We now compute the value of $H\circ\partial$ \\
		\begin{equation*}
		\boxed{\begin{tikzpicture}[scale=.6]
			\node at (0,2) {$1$}; \draw (0,1.7) -- (0,1);
			\node at (1,2) {$\dots$};
			\node at (2,2) {$n$}; \draw (2,1.7) -- (2,.9);
			
			\draw [dashed] (0,0) rectangle (2,1); \node at (1, .5) {$G$};
			
			\node at (0,-1) {$1$}; \draw (0,-.7) -- (0,0);
			\node at (1,-1) {$\dots$};
			\node at (2,-1) {$m$}; \draw (2,-.7) -- (2,0);
			
			\node at (3,.75) {$\partial$}; \draw [|->] (2.5,.5) -- (3.5,.5);
			\end{tikzpicture}
			\; 
			\begin{tikzpicture}[scale=.6]
			\node at (0,2) {$1$}; \draw (0,1.7) -- (0,1);
			\node at (1,2) {$\dots$};
			\node at (2,2) {$n$}; \draw (2,1.7) -- (2,.9);
			
			\draw [dashed] (0,0) rectangle (2,1); \node at (1, .5) {$\partial G$};
			
			\node at (0,-1) {$1$}; \draw (0,-.7) -- (0,0);
			\node at (1,-1) {$\dots$};
			\node at (2,-1) {$m$}; \draw (2,-.7) -- (2,0);
			
			\node at (3,.75) {$H$}; \draw [|->] (2.5,.5) -- (3.5,.5);
			\end{tikzpicture}
			\ 
			\begin{tikzpicture}[scale=.6]
			\node at (0,2) {$1$}; \draw (0,1.7) -- (0,1);
			\node at (1,2) {$\dots$};
			\node at (2,2) {$n$}; \draw (2,1.7) -- (2,.9);
			
			\draw [dashed] (0,0) rectangle (2,1); \node at (1, .5) {$\partial G$};
			
			\draw (0,-.4)  to[out=140,in=-140]  (0,1.4);
			\node at (0,-1) {$1$}; \draw (0,-.7) -- (0,0);
			\node at (1,-1) {$\dots$};
			\node at (2,-1) {$m$}; \draw (2,-.7) -- (2,0);
			\end{tikzpicture}}	
		\end{equation*}\\
		And the value of $\partial\circ H$\\
		\begin{equation*}
		\boxed{\begin{tikzpicture}[scale=.6]
			\node at (0,2) {$1$}; \draw (0,1.7) -- (0,1);
			\node at (1,2) {$\dots$};
			\node at (2,2) {$n$}; \draw (2,1.7) -- (2,.9);
			
			\draw [dashed] (0,0) rectangle (2,1); \node at (1, .5) {$G$};
			
			\node at (0,-1) {$1$}; \draw (0,-.7) -- (0,0);
			\node at (1,-1) {$\dots$};
			\node at (2,-1) {$m$}; \draw (2,-.7) -- (2,0);
			
			\node at (3,.8) {$H$}; \draw [|->] (2.5,.5) -- (3.5,.5);
			\end{tikzpicture}
			\  
			\begin{tikzpicture}[scale=.6]
			\node at (0,2) {$1$}; \draw (0,1.7) -- (0,1);
			\node at (1,2) {$\dots$};
			\node at (2,2) {$n$}; \draw (2,1.7) -- (2,.9);
			
			\draw [dashed] (0,0) rectangle (2,1); \node at (1, .5) {$G$};
			
			\draw (0,-.4)  to[out=140,in=-140]  (0,1.4);
			\node at (0,-1) {$1$}; \draw (0,-.7) -- (0,0);
			\node at (1,-1) {$\dots$};
			\node at (2,-1) {$m$}; \draw (2,-.7) -- (2,0);
			
			\node at (3,.8) {$\partial$}; \draw [|->] (2.5,.5) -- (3.5,.5);
			\end{tikzpicture}
			\begin{tikzpicture}[scale=.6]
			\node at (0,2) {$1$}; \draw (0,1.7) -- (0,1);
			\node at (1,2) {$\dots$};
			\node at (2,2) {$n$}; \draw (2,1.7) -- (2,.9);
			
			\draw [dashed] (0,0) rectangle (2,1); \node at (1, .5) {$G$};
			
			\node at (-.5,-.4){$\bullet$}; \draw (-.5,-.4)  to[out=90,in=-140]  (0,1.4);
			\node at (0,-1) {$1$}; \draw (0,-.7) -- (0,0);
			\node at (1,-1) {$\dots$};
			\node at (2,-1) {$m$}; \draw (2,-.7) -- (2,0);
			
			\node at (2.5,.5) {$-$}; 
			\end{tikzpicture}
			\!
			\begin{tikzpicture}[scale=.6]
			\node at (0,2) {$1$}; \draw (0,1.7) -- (0,1);
			\node at (1,2) {$\dots$};
			\node at (2,2) {$n$}; \draw (2,1.7) -- (2,.9);
			
			\draw [dashed] (0,0) rectangle (2,1); \node at (1, .5) {$G$};
			
			\node at (0,-.3){$\bullet$}; \draw (0,-.3) -- (0,0);
			\node at (0,-1) {$1$}; \draw (0,-.7)  to[out=140,in=-140]  (0,1.4);
			\node at (1,-1) {$\dots$};
			\node at (2,-1) {$m$}; \draw (2,-.7) -- (2,0);
			
			\node at (2.5,.5) {$-$}; 
			\end{tikzpicture}		
			\begin{tikzpicture}[scale=.6]
			\node at (0,2) {$1$}; \draw (0,1.7) -- (0,1);
			\node at (1,2) {$\dots$};
			\node at (2,2) {$n$}; \draw (2,1.7) -- (2,.9);
			
			\draw [dashed] (0,0) rectangle (2,1); \node at (1, .5) {$\partial G$};
			
			\draw (0,-.4)  to[out=140,in=-140]  (0,1.4);
			\node at (0,-1) {$1$}; \draw (0,-.7) -- (0,0);
			\node at (1,-1) {$\dots$};
			\node at (2,-1) {$m$}; \draw (2,-.7) -- (2,0);
			\end{tikzpicture}
		}\end{equation*}\\
		These computations show that $i$ and $r$ are chain homotopy inverses. 
		
		We verify that $\S(n,0)$ has the homology of a point. In fact, the whole complex $\S(n,0)$ is generated by the following element \vspace*{-17pt}
		\begin{center}
			\boxed{\begin{tikzpicture}[scale=.4]
				\node at (0,0){$\bullet$}; \draw (0,0) -- (0,2);
				\node at (1,0){$\bullet$}; \draw (1,0) -- (1,2);
				\node at (3,0){$\bullet$}; \draw (3,0) -- (3,2);
				\node at (2,1){$\cdots$}; 
				\end{tikzpicture}}
		\end{center}\vspace*{-10pt}
		To see this, we consider any basis element in $\S(n,0)$ and follow up any terminal strand. If we encounter \product , then that linear generator is $0$. If we encounter \coproduct\,, then the strand can be removed. We can perform this simplification until the chosen basis element equals the one above.
		
		Finally, if $\psi_{(n,m)}:\S(n,m)\to\overline{\End}(n,m)$ is the biarity $(n,m)$ part of the map, then we can show it is a homology isomorphism with an induction argument using the commutative diagram
		$$
		\xymatrix{
			\S(n,m) \ar[d]_{\psi_{(n,m)}} & \S(n,m+1) \ar[d]^{\psi_{(n,m+1)}} \ar[l]_-{r} \\
			\overline{\End}(k)(n,m) \ar[r]^-{\cong} & \overline{\End}(k)(n,m+1) }
		$$
		where the map in the bottom is induced from the isomorphism $k\cong k\tensor k$.
	\end{proof}
	
	\begin{theorem} \label{sigma-free resolution}
		The prop $\S$ is an $E_\infty$-prop.
		\begin{proof}
			Since by construction the action of $\Sigma_m$ on $U(\S)(m)=\S(1,m)$ is free, the theorem follows from the previous lemma.
		\end{proof}
	\end{theorem}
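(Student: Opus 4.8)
The plan is to unwind the definition of an $E_\infty$-prop: $\S$ is an $E_\infty$-prop exactly when the operad $U(\S)$, with $U(\S)(m) = \S(1,m)$, is an $E_\infty$-operad, and following May this amounts to three checks, namely that $U(\S)$ is $\Sigma$-free, that the canonical augmentation $U(\S)\to\End(k)$ is a homology isomorphism in every arity, and that $U(\S)(0) = k$. The homological content is entirely supplied by the previous lemma; what remains is to see that it specializes correctly to the sub-$\Sigma$-module $\{\S(1,m)\}_{m\ge0}$ and to record the freeness of the symmetric group actions there.

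First I would check that the left $\Sigma_m$-action on $\S(1,m)$ is free, which is the only step requiring a genuine argument rather than a citation of the lemma. A $k$-basis of $F(G)(1,m)$ is given by isomorphism classes of decorated $(1,m)$-graphs, with $\Sigma_m$ permuting the labels of the $m$ outgoing external half-edges; the three relations imposed in Definition \ref{Prop S} are each homogeneous for this action and never identify a basis graph with a nontrivial relabeling of itself, so the quotient $\S(1,m)$ remains a free $k[\Sigma_m]$-module. This is the ``by construction'' point. Second, from the computation in the proof of the lemma showing that $\S(n,0)$ is generated as a complex by the single degree-zero element consisting of $n$ strands each capped by $\counit$, one reads off $\S(1,0) = k$ concentrated in degree $0$, so $U(\S)(0) = k$.

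Finally I would identify the augmentation: restricting the prop morphism $\S\to\overline{\End}(k)$ of the lemma to biarity $(1,-)$ gives an operad morphism $U(\S)(m) = \S(1,m)\to\overline{\End}(k)(1,m) = \End(k)(1,m) = \Hom(k,k^{\tensor m})$, which sends $\coproduct$ to the coproduct of $k$, $\counit$ to its counit, and the degree-one generator $\product$ to $0$ (forced, since the target is concentrated in degree $0$); this is precisely the canonical augmentation $U(\S)\to\End(k)$. By the lemma it is a homology isomorphism in each arity $m$. Combining the three points, $U(\S)$ is a $\Sigma$-free resolution of $\End(k)$ with $U(\S)(0) = k$, hence an $E_\infty$-operad, and therefore $\S$ is an $E_\infty$-prop. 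I expect the main (indeed only) obstacle to be the $\Sigma$-freeness verification; once the presentation is written down this is essentially immediate, and everything else is a transcription of the lemma into operadic language.
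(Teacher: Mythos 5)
Your proposal is correct and follows the same route as the paper: the paper's proof simply cites the $\Sigma_m$-freeness of $\S(1,m)$ ``by construction'' together with the previous lemma, and your write-up just makes explicit the three checks (freeness, $U(\S)(0)=k$, and that the restriction of $\S\to\overline{\End}(k)$ to biarity $(1,-)$ is the augmentation) that the paper leaves implicit. No gaps.
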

	
	\begin{remark}
		Notice that the operad obtained by restricting to $\{S(n,1)\}$ is not $\Sigma$-free. For example, 
		\begin{center}
			\boxed{\begin{tikzpicture}[scale=.4]
				\node at (0,0){$\bullet$}; \draw (0,0) -- (0,2); \node at (0,2.5){$1$};
				\node at (1,0){$\bullet$}; \draw (1,0) -- (1,2); \node at (1,2.5){$2$};
				\draw (2,-.05) -- (2,2); \node at (2,2.5){$3$};
				\end{tikzpicture}}
		\end{center}
		in $\S(3,1)$ is fixed by the transposition $(1,2)$.
	\end{remark}	
	
	\section{The simplex category and the prop $\S$}
	In this section we describe a family of $\S$-bialgebra structures on the chains of the standard simplices that are natural with respect to simplicial maps, i.e., natural prop morphisms 
	$$\big\{\S\to\End(\chains(\Delta^d))\big\}_{d\in\Delta\, .}$$
	The image of the three generators of $\S$ are the Alexander-Whitney map, the augmentation map, and a differential graded version of the join of simplices. For any simplicial set $X$ this family induces natural $E_\infty$-structures
	$$U(\S)\to \End(\chains(X))\hspace*{.5cm}\text{and}\hspace*{.5cm}U(\S)\to \End^{\mathrm{op}}(\cochains(X)).$$
	
	\subsection{The natural $\S$-bialgebra on the chains of standard simplices}
	\begin{notation}
		Consider the simplex category $\Delta$ and the functors of normalized chains $\chains(-)$ and cochains $\cochains(-)$ with coefficients in $k$. For every object $d$ in the simplex category we respectively denote the image of $\Hom_{\Delta}(-,d)$ with respect to these functors by $\chains(\Delta^d)$ and $\cochains(\Delta^d)$.
	\end{notation}
	
	\begin{theorem} \label{chain representations}
		For every object $d$ of the simplex category, the following assignment of generators defines a prop morphism $\S\to\End(\chains(\Delta^d))$ which is natural with respect to simplicial maps: \par
		\ \ \ \ $\counit\in\Hom(\chains(\Delta^d),k)_0$ is the augmentation map, i.e. 
		$$\counit\ [v_0,\dots,v_q]=\begin{cases} 1 & \text{ if } q=0 \\ 0 & \text{ if } q>0.\end{cases}$$
		\ \ \ $\coproduct\in\Hom(\chains(\Delta^d),\chains(\Delta^d)^{\otimes2})_0$ is the Alexander-Whitney map, i.e. $$\coproduct\ [v_0,\dots,v_q]=\sum_{i=0}^q[v_0,\dots,v_i]\otimes[v_i,\dots,v_q].$$ 
		\ \ \ $\product\in\Hom(\chains(\Delta^d)^{\otimes2},\chains(\Delta^d))_1$ is the join map, i.e. 
		$$\product\ \big(\left[v_0,\dots,v_p \right] \otimes\left[v_{p+1},\dots,v_q\right]\big)=\begin{cases} (-1)^p\text{sign}(\pi)\cdot\left[v_{\pi(0)},\dots,v_{\pi(q)}\right] & \text{ if } i\neq j \text{ implies } v_i\neq v_j \\ \hspace*{1.15cm}
		0 & \text{ if not} \end{cases}
		$$
		\ \ \ where $\pi$ is the permutation that orders the totally ordered set of vertices.
		\begin{proof}[Proof] Throughout this proof we identify\ \counit\ , \ \coproduct \ ,\ and \product \ with their images in $\End(\chains(\Delta^d))$.
			We need to verify that the relations of $\S$ are satisfied by these images. 
			
			The fact that the four maps\hspace*{.2cm} \leftcounitality \ , \hspace*{.3cm} \rightcounitality \ , \hspace*{.2cm} $\partial$ \coproduct \ , \, and \ $\partial$ \counit \ \ are equal to the corresponding zero map is classical and can be easily verified. 
			
			The join map \product\ is of degree $1$ and the augmentation map \counit\ vanishes on chains of degree greater than $0$, so \productcounit\ is the zero map. 
			
			In order to establish $\partial\ \product=\ \boundary$\;, we must consider six cases for the basis elements which the map is applied to. Recall that the boundary in a $\Hom$ complex is given by $(\partial f)(v) = \partial(f(v)) - (-1)^{\left|f\right|} f(\partial(v))$.
			\begin{enumerate}[leftmargin=.6cm]
				\item Both of degree $0$ and not sharing a vertex\par
				\begin{center}
					\begin{tikzcd}
						& {[v]\tensor[w]} \arrow[swap, dl, maps to, "\product", out=180, in=90] \arrow[d, maps to, "\boundary\ ",swap ] \arrow[dr, maps to, "\displaystyle{\partial}", in=90, out=0]  & \\
						{[v,w]} \arrow[dr, |- , "\displaystyle{\partial}", out=-90, in=180, swap] &
						{[w]-[v]}  & 
						{0}  \arrow[swap, dl, |- , "\product", out=-90, in=0, swap] \\
						& + \uar &
					\end{tikzcd}
				\end{center}
				
				\item Both of degree $0$ and sharing a vertex\par
				\begin{center}
					\begin{tikzcd}
						& {[v]\tensor[v]} \arrow[swap, dl, maps to, "\product", out=180, in=90] \arrow[d, maps to, "\boundary\ ", swap ] \arrow[dr, maps to, "\displaystyle{\partial}", in=90, out=0]  &  \\
						{0} \arrow[dr, |- , "\displaystyle{\partial}", out=-90, in=180, swap] & 
						{0}  & 
						{0}  \arrow[swap, dl, |- , "\product",out=-90, in=0, swap]\\
						& + \uar &
					\end{tikzcd}
				\end{center}
				
				\item Only one of degree $0$ and sharing a vertex\par 		
				\begin{center}
					\begin{tikzcd}[column sep=tiny]
						& {[v]\tensor[v_0,\dots,v_p]} \arrow[swap, dl, maps to, "\product", out=180, in=90] \arrow[d, maps to, "\boundary\ ", swap ] \arrow[dr, maps to, "\displaystyle{\partial}", in=90, out=0]  &	[-10pt]  \\
						{0} \arrow[dr, |- , "\displaystyle{\partial}", out=-90, in=190, swap] &
						{[v_0,\dots,v_p]}  &
						{\sum_{i\geq1}(-1)^{i}[v]\tensor[v_0,\dots,\hat{v}_i,\dots,v_p]}  \arrow[dl, |- , "\product", out=-90, in=-15, swap]  \\
						& + \uar &
					\end{tikzcd}
				\end{center}
				
				\item Only one of degree $0$ and not sharing a vertex\par 	
				\begin{center}
					\begin{tikzcd}[column sep=-5pt]
						& {[v_0]\tensor[v_1,\dots,v_p]} \arrow[swap, dl, maps to, "\product", out=180, in=90] \arrow[swap, d, maps to, "\boundary\ " ] \arrow[dr, maps to, "\displaystyle{\partial}",in=90, out=0]  &	\\
						{sign(\pi)\cdot[v_{\pi(0)},\dots,v_{\pi(p)}]} \arrow[dr, |- , "\displaystyle{\partial}", out=-90, in=190, swap] & 
						{[v_1,\dots,v_p]}  & 
						{\sum_{i\geq1}(-1)^{i-1}[v_0]\tensor[v_1,\dots,\hat{v}_i,\dots,v_p]} \arrow[dl, |- , "\product", out=-90, in=-15, swap] \\
						& + \uar &
					\end{tikzcd}
				\end{center}
				
				\item None of degree $0$ and sharing a vertex\par
				\begin{center}
					\begin{tikzcd}[column sep=tiny]
						& 	{[v_0,\dots,v_p]\tensor[w_0,\dots,w_q]} \arrow[swap, dl, maps to, "\product",out=180, in=90] \arrow[swap, d, maps to, "\boundary\ " ] \arrow[dr, maps to, "\displaystyle{\partial}",in=90, out=0]  & 	[-45pt] \\
						{0} \arrow[dr, |- , "\displaystyle{\partial}", out=-90, in=190, swap]& 
						{0}  &
						\begingroup	\renewcommand*{\arraystretch}{1.5}
						\begin{matrix}\sum_{i}(-1)^{i}[v_0,\dots,\hat v_i,\dots,v_p]\tensor[w_0,\dots,w_q]\ + \\(-1)^p\sum_{j}(-1)^{j}[v_0,\dots,v_p]\tensor[w_0,\dots,\hat{w}_j,\dots,w_q]	\end{matrix}
						\endgroup 	\arrow[dl, |- , "\product", out=-90, in=-15, swap] \\
						& + \uar &
					\end{tikzcd}
				\end{center}
				
				\item None of degree $0$ and not sharing a vertex\par
				\begin{center}
					\begin{tikzcd}[column sep=-45pt]
						& {[v_0,\dots,v_p]\tensor[v_{p+1},\dots,v_q]} \arrow[swap, dl, maps to, "\product", out=180, in=90] \arrow[d, maps to, "\boundary\ ", swap] \arrow[dr, maps to, "\displaystyle{\partial}", in=90, out=0]  & [-0pt]  \\
						{\begin{matrix}	(-1)^p sign(\pi)\ \cdot [v_{\pi(0)},\dots,v_{\pi(q)}]\end{matrix}}  \arrow[dr, |- , "\displaystyle{\partial}", out=-90, in=190, swap] & 
						{0}  & 
						\begingroup
						\renewcommand*{\arraystretch}{1.5}
						\begin{matrix}
							\sum_{i}(-1)^{i}[v_0,\dots,\hat v_i,\dots,v_p]\tensor[v_{p+1},\dots,v_q]\ + \\ 
							\sum_{i}(-1)^{i-1}[v_0,\dots,v_p]\tensor[v_{p+1},\dots,\hat{v}_i,\dots,v_q]
						\end{matrix}
						\endgroup \arrow[dl, |- , "\product", out=-90, in=-15, swap] \\
						& + \uar &
					\end{tikzcd}
				\end{center}
			\end{enumerate}
		\end{proof}
	\end{theorem}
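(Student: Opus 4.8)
# Proof Proposal for Theorem \ref{chain representations}

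The plan is to verify directly that the three named maps on $\chains(\Delta^d)$ satisfy the defining relations of the prop $\S$, namely the three relations of Definition \ref{Prop S} and the differential constraints $\partial\counit = 0$, $\partial\coproduct = 0$, $\partial\product = \boundary$; naturality with respect to simplicial maps will then follow because each of the three maps is manifestly defined by a formula in the vertices that commutes with the face and degeneracy operators. First I would dispense with the easy relations: the counitality relations $\leftcounitality$ and $\rightcounitality$ reduce to the classical fact that the Alexander-Whitney diagonal is counital for the augmentation, and $\partial\coproduct = 0$ is the standard statement that the Alexander-Whitney map is a chain map; these are well known and I would only cite them. The relation $\productcounit$ is immediate for degree reasons: $\product$ raises degree by $1$ so its output lies in degree $\geq 1$, on which $\counit$ vanishes, hence the composite is zero.

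The substance of the proof is the single relation $\partial\product = \boundary$, i.e. that the join map is a chain homotopy witnessing that $[v]\tensor[w]\mapsto [w]$ and $[v]\tensor[w]\mapsto[v]$ (on degree zero, via the two endpoint inclusions, suitably signed) agree up to the differential — more precisely that $\partial\circ\product + \product\circ\partial$ equals the prescribed combination $\boundary$ of endpoint maps. Recalling the Hom-complex differential $(\partial f)(x) = \partial(f(x)) - (-1)^{|f|}f(\partial x)$ with $|\product| = 1$, I would organize the verification by the six cases enumerated in the statement, according to the degrees of the two tensor factors (both zero / one zero / neither zero) and whether the two simplices share a vertex. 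In each case one writes out $\partial(\product(x))$ and $\product(\partial x)$ as explicit alternating sums of simplices and checks they combine to give the right-hand side. The only genuinely delicate bookkeeping is the sign: one must track the sign $(-1)^p$ coming from the Koszul rule applied to $\product$ acting past the first factor in $\product(\partial x)$, and the sign $\mathrm{sign}(\pi)$ of the reordering permutation, and confirm that deleting a vertex $v_i$ from $[v_0,\dots,v_q]$ versus deleting it before reordering produces consistent signs — this is where a careful but routine induction on the number of transpositions in $\pi$ is needed.

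The main obstacle I anticipate is precisely this sign coherence in cases (4), (5), and (6), where the interaction between the Koszul sign, the sign $(-1)^p$ in the definition of $\product$, and the sign of the ordering permutation $\pi$ must all cancel correctly; the vanishing-when-sharing-a-vertex clause handles cases (2), (3), (5) on the $\product\circ\product$-type terms but one still has to see that the surviving terms in $\product(\partial x)$ telescope against $\partial(\product(x))$. A clean way to control this is to reduce to the case where the $q+1$ vertices are distinct and already linearly ordered (so $\pi = \mathrm{id}$), prove the identity there by the standard simplicial-boundary telescoping argument, and then argue that both sides transform the same way under permuting the vertex labels, so the general signed statement follows. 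Once $\partial\product = \boundary$ is established, all relations of $\S$ hold in $\End(\chains(\Delta^d))$, so the assignment extends uniquely to a prop morphism, and naturality is inherited from the vertexwise definitions.
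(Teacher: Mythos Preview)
Your proposal is correct and follows essentially the same approach as the paper: dispose of the counitality relations, $\partial\,\counit$, $\partial\,\coproduct$, and $\productcounit$ as classical or immediate, then verify $\partial\,\product=\,\boundary$ by the same six-case split according to the degrees of the two tensor factors and whether they share a vertex. Your suggested tactic of first checking the already-ordered case and then arguing equivariance under vertex permutations is a reasonable way to manage the signs, though the paper simply records the explicit computation in each case via diagrams; either way the content is the same.
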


	\subsection{The associated $E_\infty$-coalgebra on the chains of simplicial sets}
	
	\begin{remark}\label{from prop rep to operad rep}
		The natural family of prop morphisms $\{\S\to \End(\chains(\Delta^d))\}_{d\in\Delta}$ of Theorem \ref{chain representations} restricts to a natural family of operad morphisms $\{U(\S)\to \End(\chains(\Delta^d))\}_{d\in\Delta\,.}$
	\end{remark}

	\begin{corollary} \label{simplicial set representations}
		The families of prop morphisms described in Remark \ref{from prop rep to operad rep} induce, for every simplical set $X$, natural operad morphisms 
		$$U(\S)\to \End(\chains(X))\hspace*{1cm}\text{and}\hspace*{1cm}U(\S)\to \End^{\mathrm{op}}(\cochains(X)).$$
	\end{corollary}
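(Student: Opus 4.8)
The plan is to pass from the family of operad morphisms on standard simplices to a single operad morphism on the chains of an arbitrary simplicial set $X$ by a standard colimit/naturality argument, then obtain the cochain statement by dualizing. First I would recall that for a simplicial set $X$ one has the canonical presentation $X \cong \operatorname{colim}_{\Delta^d \to X} \Delta^d$ as a colimit over the simplex category $\Delta/X$, and that $\chains(-)$, being a left adjoint (it is the composite of the free $k$-module functor on $X$ followed by normalization, both of which commute with colimits), sends this to $\chains(X) \cong \operatorname{colim}_{\Delta^d \to X} \chains(\Delta^d)$ in $\Mod$. The key point is that the operad $\End(-)$ does not itself preserve colimits, so one cannot simply take a colimit of the morphisms $U(\S) \to \End(\chains(\Delta^d))$; instead one uses that a $U(\S)$-coalgebra structure is the same as, for each arity $m$ and each generator (basis element) $\theta \in U(\S)(m)$ of appropriate degree, a chain map $\theta_X \colon \chains(X) \to \chains(X)^{\tensor m}$, assembled compatibly with composition, units, and $\Sigma_m$-actions. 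Since $\chains(X)^{\tensor m} = \left(\operatorname{colim}_{\sigma} \chains(\Delta^{d_\sigma})\right)^{\tensor m}$ and $\tensor$ preserves colimits in each variable, one builds $\theta_X$ out of the universal property of the colimit in the source, using the naturality in $d$ established in Theorem~\ref{chain representations} to check that the maps $\theta_{\Delta^{d}}$ are compatible over the diagram $\Delta/X$. Concretely: for a simplex $\sigma \colon \Delta^{d} \to X$ with structure map $\iota_\sigma \colon \chains(\Delta^d) \to \chains(X)$, one sets $\theta_X \circ \iota_\sigma \defeq \iota_\sigma^{\tensor m} \circ \theta_{\Delta^d}$, and for a morphism $\alpha \colon d \to d'$ in $\Delta$ with $\sigma = \sigma' \circ \alpha$, naturality of Theorem~\ref{chain representations} gives $\theta_{\Delta^{d'}} \circ \alpha_* = (\alpha_*)^{\tensor m} \circ \theta_{\Delta^d}$, which is exactly the compatibility needed for $\theta_X$ to be well-defined on the colimit.

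The second step is to verify that the resulting assembled maps $\{\theta_X\}$ again satisfy the defining relations of the operad $U(\S)$, i.e.\ that $X \mapsto \bigl(U(\S) \to \End(\chains(X))\bigr)$ is actually an operad morphism. This is immediate from the uniqueness clause of the universal property: two morphisms out of a colimit that agree after precomposition with all structure maps are equal, and each relation of $U(\S)$, when postcomposed into $\End(\chains(X))$ and precomposed with an $\iota_\sigma$ (resp.\ $\iota_{\sigma_1} \tensor \cdots$ for composites), reduces via the display above to the corresponding relation for $\chains(\Delta^d)$, which holds by Theorem~\ref{chain representations} and Remark~\ref{from prop rep to operad rep}. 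Naturality in $X$ is similar: a simplicial map $f \colon X \to Y$ induces a map of simplex categories $\Delta/X \to \Delta/Y$ and hence commutes with the structure maps $\iota_\sigma$, so $\theta_Y \circ \chains(f) = \chains(f)^{\tensor m} \circ \theta_X$ again by the universal property.

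Finally, the cochain statement follows by applying the contravariant functor $\Hom(-,k)$. For each $d$, dualizing $U(\S) \to \End(\chains(\Delta^d))$ and using the natural map $\cochains(\Delta^d)^{\tensor m} \to \Hom(\chains(\Delta^d)^{\tensor m}, k)$ (an isomorphism in the degrees that matter since the $\chains(\Delta^d)$ are finitely generated in each degree) yields an operad morphism $U(\S) \to \End^{\mathrm{op}}(\cochains(\Delta^d))$; for general $X$ one has $\cochains(X) = \lim_{\Delta^d \to X} \cochains(\Delta^d)$, and the same universal-property bookkeeping as above — now for a limit rather than a colimit, and with the arrows reversed — produces the operad morphism $U(\S) \to \End^{\mathrm{op}}(\cochains(X))$, natural in $X$. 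I expect the main obstacle to be purely expository rather than mathematical: namely, making precise and painless the passage from ``operad morphism into $\End(-)$'' to ``compatible family of chain maps $\chains(X) \to \chains(X)^{\tensor m}$'' so that the colimit argument can be run componentwise, since $\End(-)$ is not colimit-preserving and one must instead exploit that the colimit is taken in $\Mod$ and that tensor powers preserve it; once that reformulation is in place every verification is a one-line appeal to the universal property plus Theorem~\ref{chain representations}.
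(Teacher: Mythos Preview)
Your proposal is correct and amounts to the same argument the paper invokes: the paper simply cites this as a standard Kan extension, phrased as an operad morphism from $U(\S)$ into the Eilenberg--Zilber operad (the operad of natural multilinear operations on normalized chains), and defers the details to \cite{may2003operads}. Your explicit colimit construction is exactly what underlies that reference, so there is no substantive difference in approach.
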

	\begin{proof}
		This is a straighforward application of the Kan extension construction. The details can be found for example in \cite{may2003operads}, where such a family is thought of as an operad morphism from $U(\S)$ to the canonical Eilemberg-Zilber operad.  
	\end{proof}
	\begin{remark}
		It is not true that the family of representations described in Theorem \ref{chain representations} induces for every simplical set $X$ a prop morphism $\S\to\End(\chains(X))$. For example, consider the simplicial set containing only two $0$-simplices, whose join cannot be defined. 
	\end{remark}
	
	\begin{example}\label{example (121)[012] a)} For any simplicial set $X$, the following elements in $\S(1,2)$ are sent to representatives of the first three Steenrod cup-i coproducts in $\chains(X)$.
	\begin{center}
		\boxed{\begin{tikzpicture}[scale=.55]
				\draw (1,3.7) to (1,3); 
				
				\draw (1,3) to [out=205, in=90] (0,0);
				\draw (1,3) to [out=-25, in=90] (2,0); 
				
				\node at (1,-.5){$\Delta_0$};
			\end{tikzpicture}\hspace*{1cm}
			\begin{tikzpicture}[scale=.55]
				\draw (1,3.7) to (1,3); 
				
				\draw (1,3) to [out=205, in=90] (0,0);
				
				\draw [shorten >= 0cm] (.6,2.73) to [out=-100, in=90] (2,0);
				
				\draw [shorten >= .15cm] (1,3) to [out=-25, in=30, distance=1.1cm] (1,1.5);
				\draw [shorten <= .1cm] (1,1.5) to [out=210, in=20] (0,1);
				
				\node at (1,-.5){$\Delta_1$};
			\end{tikzpicture}\hspace*{1cm}
			\begin{tikzpicture}[scale=.55]
				\draw (1,3.7) to (1,3); 
				
				\draw (1,3) to [out=205, in=90] (0,0);
				\draw (1,3) to [out=-25, in=90] (2,0); 
				
				\draw [shorten >= 0cm] (.6,2.73) to [out=210, in=135] (1,1.5);
				\draw [shorten <= 0cm] (1,1.5) to [out=-45, in=170] (2,1);
				
				\draw [shorten >= .1cm] (1.4,2.73) to [out=-30, in=45] (1,1.5);
				\draw [shorten <= .1cm] (1,1.5) to [out=-135, in=10] (0,1);
				
				\node at (1,-.5){$\Delta_2$};
			\end{tikzpicture}}
		\end{center}
		
		As an example we compute $\Delta_1[0,1,2]\in\chains(\Delta^2)\tensor\chains(\Delta^2)$. 
		\begin{center}
			\begin{tikzpicture}[scale=.15]
			\node at (-7,1.5){$\Delta_1[0,1,2]=$};
			\draw (1,3.7) to (1,3); 
			
			\draw (1,3) to [out=205, in=90] (0,0);
			
			\draw [shorten >= 0cm] (.6,2.73) to [out=-100, in=90] (2,0);
			
			\draw [shorten >= .15cm] (1,3) to [out=-25, in=30, distance=1.1cm] (1,1.5);
			\draw [shorten <= .1cm] (1,1.5) to [out=210, in=20] (0,1);
			\node at (7.5,1.5){$[0,1,2]=$};
			\end{tikzpicture}
			\begin{tikzpicture}[scale=.15]
			
			\draw (0,3) to [out=-90, in=90] (0,0);
			
			\draw [shorten >= 0cm] (1,3) to [out=-100, in=120] (1,1.5);
			\draw [shorten <= 0cm] (1,1.5) to [out=-60, in=90] (2,0);
			
			\draw [shorten >= .1cm] (2,3) to [out=-90, in=30, distance=1.1cm] (1,1.5);
			\draw [shorten <= .1cm] (1,1.5) to [out=210, in=20] (0,1);	
			
			\node at (3,1.5){$\circ$};
			\draw (5,3.7) to (5,3); 
			\draw [shorten >= 0cm] (4.6,2.73) to [out=-50, in=90] (5,0);
			\draw (5,3) to [out=205, in=90] (4,0);
			\draw (5,3) to [out=-25, in=90] (6,0); 
			\node at (11,1.5){$[0,1,2].$};
			\end{tikzpicture}
		\end{center}	
		\hspace*{-.11cm}\begin{tikzpicture}[scale=.15]
		\node at (-23,1.5){We compute the iterated Alexander-Whitney map };
		\draw (5,3.7) to (5,3); 
		\draw [shorten >= 0cm] (4.6,2.73) to [out=-50, in=90] (5,0);
		\draw (5,3) to [out=205, in=90] (4,0);
		\draw (5,3) to [out=-25, in=90] (6,0); 
		\node at (10.5,1.5){$[0,1,2]$};
		\node at (22,1.5){to be equal to};
		\end{tikzpicture}
		$$\small [0]\tensor[0]\tensor[0,1,2]+[0]\tensor[0,1]\tensor[1,2]+[0]\tensor[0,1,2]\tensor[2]+[0,1]\tensor[1]\tensor[1,2]+[0,1]\tensor[1,2]\tensor[2]+[0,1,2]\tensor[2]\tensor[2].$$
		\hspace*{-.11cm}\begin{tikzpicture}[scale=.15]
		\node at (-6,1.5){Applying};	
		\draw (0,3) to [out=-90, in=90] (0,0);
		
		\draw [shorten >= 0cm] (1,3) to [out=-100, in=120] (1,1.5);
		\draw [shorten <= 0cm] (1,1.5) to [out=-60, in=90] (2,0);
		
		\draw [shorten >= .1cm] (2,3) to [out=-90, in=30, distance=1.1cm] (1,1.5);
		\draw [shorten <= .1cm] (1,1.5) to [out=210, in=20] (0,1);
		
		\node at(8,1.5){to it gives};
		\end{tikzpicture}
		$$-[0,1,2]\tensor[0,1]+[0,2]\tensor[0,1,2]+[0,1,2]\tensor[1,2]$$
		and therefore,
		$$\Delta_1[0,1,2]=-[0,1,2]\tensor[0,1]+[0,2]\tensor[0,1,2]+[0,1,2]\tensor[1,2].$$
		For an axiomatic characterization of these coproducts and their relationship to the nerve of higher dimensional categories see \cite{medina2018axiomatic} and \cite{medina2019globular}. For a state of the art algorithm for their computation and a cochain level proof of the Cartan formula see \cite{medina2018persistence} and \cite{medina2019effective}.
	\end{example}
	
	\appendix
	
	\section{The prop $\MS$ and the Surjection operad}
	
	In this appendix we construct $\MS$ a quotient prop of $\S$, also finitely presented, whose associated operad is isomorphic up to signs to the Surjection operad $Sur$ of McClure-Smith \cite{mcclure2003multivariable} and Berger-Fresse \cite{berger2004combinatorial}.
	
	Additionally, we identify explicitly a suboperad $U(\Sh)$ of $U(\S)$ making the following diagram commute
	\begin{center}
		\begin{tikzcd}
		U(\S)  \arrow[r, "U(\varphi)"] & \End(\chains(X); \F_2) \\
		U(\Sh) \arrow[u, hook] \arrow[d, two heads] &        \\
		U(\MS) \arrow[r, "\cong"] & Sur \arrow[uu, "\phi"']
		\end{tikzcd}
	\end{center}
	for every simplicial set $X$, where $\phi$ is the $Sur$-coalgebra structure defined by these authors and $U(\phi)$ is the one defined in Corrolary \ref{simplicial set representations}.
	
	\subsection{The prop $\MS$}
	
	\begin{definition}\label{Prop MS}
		Let $\MS$ be defined by the same presentation as $\S$ together with the commutative, associative, coassociative, Leibniz, and involutive relations
		\begin{center}	
			\boxed{
				\begin{tikzpicture}[scale=.23]
				\node at (4.8,4.2){};
				\node at (4.8,-.2){};
				
				\node at (5,4){$\scriptstyle 1$};
				\node at (7,4){$\scriptstyle 2$};
				\draw (5,3)--(5,2)--(6,1)--(6,0);
				\draw (7,3)--(7,2)--(6,1)--(6,0);
				
				\node at (8,2){$\scriptstyle -$};
				
				\node at (9,4){$\scriptstyle 2$};
				\node at (11,4){$\scriptstyle 1$};
				\draw (9,3)--(9,2)--(10,1)--(10,0);
				\draw (11,3)--(11,2)--(10,1)--(10,0);
				
				\node at (13.5,2){;};
				\end{tikzpicture}\ \ 
				\begin{tikzpicture}[scale=.23]
				\node at (4.8,4.2){};
				\node at (4.8,-.2){};
				
				\draw (7,4)--(7,3)--(6,2);
				\draw (5,4)--(5,3)--(7,1)--(7,0);
				\draw (8,4)--(8,2)--(7,1)--(7,0);
				\node at (9,2){$\scriptstyle -$};
				
				\draw (10,4)--(10,2)--(11,1)--(11,0);
				\draw (13,4)--(13,3)--(11,1);
				\draw (11,4)--(11,3)--(12,2);
				
				\node at (15,2){;};
				\end{tikzpicture}\ \ 
				\begin{tikzpicture}[scale=.23]
				\node at (4.8,-4.2){};
				\node at (4.8,.2){};
				
				\draw (7,-4)--(7,-3)--(6,-2);
				\draw (5,-4)--(5,-3)--(7,-1)--(7,0);
				\draw (8,-4)--(8,-2)--(7,-1)--(7,0);
				\node at (9,-2){$\scriptstyle -$};
				
				\draw (10,-4)--(10,-2)--(11,-1)--(11,0);
				\draw (13,-4)--(13,-3)--(11,-1);
				\draw (11,-4)--(11,-3)--(12,-2);
				
				\node at (15,-2){;};
				\end{tikzpicture}\ \ 
				\begin{tikzpicture}[scale=.23]
				\draw (0,4)--(1,3)--(1,1)--(0,0);
				\draw (2,4)--(1,3);
				\draw (1,1)--(2,0);
				\node at (2.7,2){$\scriptstyle -$};
				
				\draw (5,4)--(5,3)--(4,2)--(4,0);
				\draw (5,3)--(7,1)--(7,0);
				\draw (8,4)--(8,2)--(7,1)--(7,0);
				\node at (9,2){$\scriptstyle -$};
				
				\draw (10,4)--(10,2)--(11,1)--(11,0);
				\draw (13,4)--(13,3)--(11,1);
				\draw (13,3)--(14,2)--(14,0);
				
				\node at (4.8,4.2){};
				\node at (15.3,-.2){};
				\end{tikzpicture}
				\begin{tikzpicture}[scale=.23]
				\node at (-2,2){; \ };
				\draw (1,4)--(1,3)--(0,2)--(1,1)--(1,0);
				\draw (1,3)--(2,2)--(1,1);
				
				\node at (2.2,4.2){};
				\node at(2.2,-.2){};
				\end{tikzpicture}}
		\end{center}	
		We regard $\MS$ as a quotient of $\S$ ans consider the induced quotient operad morphism $U(\S) \twoheadrightarrow U(\MS)$.
	\end{definition}
	
	\begin{notation} \label{notation: higher valence}
		We will utilize the following diagramatic simplification
		\begin{center}
			\boxed{\begin{tikzpicture}[scale=.4]
				\draw (6,2)--(7,1)--(7,0);
				\draw (8,2)--(7,1);
				\node at (6,2.5){$\scriptstyle 1$};
				\node at (7,2.5){$\scriptstyle \dots$};
				\node at (8,2.5){$\scriptstyle n$};
				
				\draw (11,.5)--(12,1.5)--(12,2.5);
				\draw (13,.5)--(12,1.5);
				\node at (11,0){$\scriptstyle 1$};
				\node at (12,0){$\scriptstyle \dots$};
				\node at (13,0){$\scriptstyle n$};
				\end{tikzpicture}
				\begin{tikzpicture}[scale=.4]
				\node at (-1,2.5){};
				\node at (1,1.5){or};
				\node at (1,0){};
				\end{tikzpicture}\qquad 
				\begin{tikzpicture}[scale=.4]		
				\draw (12.5,2.5)--(13,2);
				\draw (14.5,2.5)--(14,2);
				\node at (13.5,1.5){$\scriptstyle n-1$};
				\draw (13.5,1)--(13.5,0);
				
				\draw (17.5,0)--(18,.5);
				\draw (19.5,0)--(19,.5);
				\node at (18.5,1){$\scriptstyle n-1$};
				\draw (18.5,1.5)--(18.5,2.5);
				
				\node at (19.6,-.15){};
				\node at (12.4,2.65){};
				\end{tikzpicture}}
		\end{center}
		to represent labeled directed graphs resulting from iterated grafting of the product and coproduct in the left comb order
		\begin{center}
			\boxed{\begin{tikzpicture}[scale=.35]		
				\node at (-2.15,-.25){};
				\node at (-2.15,3.25){};
				
				\node at (-2,3.5){$\scriptstyle 1$};
				\node at (-1,3.5){$\scriptstyle 2$};
				\node at (0,3.5){$\scriptstyle 3$};
				\node at (2,3.5){$\scriptstyle n$};
				
				\draw (-2,3)--(0,1);
				\draw (2,3)--(0,1)--(0,0);
				\draw (0,3)--(-1,2);
				\draw (-1,3)--(-1.5,2.5);
				\draw (.2,2.3) node[scale= 0.5] {$\ddots$};
				\end{tikzpicture}
				\qquad 
				\begin{tikzpicture}[scale=.35]	
				
				\node at (-2,-3.5){$\scriptstyle 1$};
				\node at (-1,-3.5){$\scriptstyle 2$};
				\node at (0,-3.5){$\scriptstyle 3$};
				\node at (2,-3.5){$\scriptstyle n$};
				
				\draw (-2,-3)--(0,-1);
				\draw (2,-3)--(0,-1)--(0,0);
				\draw (0,-3)--(-1,-2);
				\draw (-1,-3)--(-1.5,-2.5);
				\draw (.2,-2.3) node[scale= 0.5, rotate = 75] {$\ddots$};
				\end{tikzpicture}}
		\end{center}
		with the case $n=1$ representing the identity element.
	\end{notation}	
	
	\begin{definition} \label{definition: surjection-like element}
		An element in $\S(1,m)$ is called \textbf{surjection-like} if it is of the form
		\begin{center}
			\boxed{	
				\begin{tikzpicture}[scale=1]
				\draw (0,0)--(0,-.6) node[below]{$\scriptstyle 1$};
				\draw (0,0)--(.5,.5);
				\draw (0,0)-- (-.2,.5) node[above]{\quad $\scriptstyle 1\ 2\, ...\ k_1$};
				\draw (-.5,.5)--(0,0);
				\node at (.13,.4){$...$};
				
				\node at (1,0){$\scriptstyle \cdots$};
				\node at (1,-.9){$\scriptstyle \cdots$};
				
				\draw (2,0)--(2,-.68) node[below]{$\scriptstyle m$};
				\draw (2,0)--(2.5,.5);
				\draw (2,0)-- (1.8,.5) node[above]{\quad $\scriptstyle 1\ 2\, ...\ k_m$};
				\draw (1.5,.5)--(2,0);
				\node at (2.13,.4){$...$};
				
				\draw (1,2.5)--(1,3) node[above]{$\scriptstyle 1$};
				\draw (1,2.5)--(0,2) node[below]{$\scriptstyle 1$};
				\draw (1,2.5)--(.5,2) node[below]{$\scriptstyle 2$};
				\draw (1,2.5)--(1,2) node[below]{$\scriptstyle 3$};
				\draw (1,2.5)--(2,1.95) node[below]{$\scriptstyle n$};
				\node at (1.5,1.75){$...$};
				
				\node at (1,1.3) {$\vdots$};
				
				\node at (2.85,0){};
				\end{tikzpicture}
			}
		\end{center} 
		with no internal vertices, in particular $\sum_{i=1}^m k_i=n$, and, for every $1\leq r\leq m$, the associated map $\overline{k}_r\to\overline{n}$ is order preserving. The surjection $\overline{n} \to \overline{m}$ defined by this surjection-like element is referred to as its \textbf{associated surjection}. 
	\end{definition}
	
	Let us consider the following pairs which we refer to as \textbf{rewriting rules}:
	\begin{equation*}
	\boxed{	\begin{aligned}
		\begin{tikzpicture}[scale=.2]
		\node at (2.5,-.2){};
		
		\draw (3,4)--(3,3)--(4,2)--(4,0);
		\draw (5,4)--(5,3)--(4,2);
		
		\draw [fill] (4,0.3) circle [radius=0.3];
		
		\draw[myptr] (6,2)--(8,2);
		\node at (9.5,2){\large 0};
		
		\node at (13,2){;};
		\end{tikzpicture}
		\begin{tikzpicture}[scale=.2]
		\node at (3.3, -4.2){};
		\node at (1.3,.2){};
		
		\draw (3,-4)--(3,-3)--(4,-2)--(4,0);
		\draw (5,-4)--(5,-3)--(4,-2);
		
		\draw [fill] (3,-3.7) circle [radius=0.3];
		
		\draw[myptr] (6,-2)--(8,-2);
		
		\draw (9.5, -4)--(9.5,0);
		
		\node at (13,-2){;};
		\end{tikzpicture}
		\begin{tikzpicture}[scale=.2]
		\node at (3.3, -4.2){};
		\node at (1.3,.2){};
		
		\draw (3,-4)--(3,-3)--(4,-2)--(4,0);
		\draw (5,-4)--(5,-3)--(4,-2);
		
		\draw [fill] (5,-3.7) circle [radius=0.3];
		
		\draw[myptr] (6,-2)--(8,-2);
		
		\draw (9.5, -4)--(9.5,0);
		
		\node at (13,-2){;};
		\end{tikzpicture}
		\begin{tikzpicture}[scale=.2]
		\node at (-1.2,0){};
		
		\draw (1,4)--(1,3)--(0,2)--(1,1)--(1,0);
		\draw (1,3)--(2,2)--(1,1);
		
		\draw[myptr] (3,1.9)--(4.8,1.9);
		\node at(6.2,1.9){\Large 0};
		
		\node at (2.2,4.2){};
		\node at(2.2,-.2){};
		
		\node at (9,2){;};
		\end{tikzpicture}
		\begin{tikzpicture}[scale=.2]
		\node at (-1.2,0){};
		
		\draw (1,4)--(1,3)--(0.5,2.5)--(1.5,1.5)--(1,1)--(1,0);
		\draw (1,3)--(1.5,2.5)--(1.2,2.2);
		\draw (1,1)--(0.5,1.5)--(.8,1.8);
		
		\draw[myptr] (3,1.9)--(4.8,1.9);
		\node at(6.2,1.9){\Large 0};
		
		\node at (2.2,4.2){};
		\node at(2.2,-.2){};
		\end{tikzpicture} \\
		\begin{tikzpicture}[scale=.2]
		\node at (4.1, 4.2){};
		\node at (2.1,-.2){};
		
		\node at (3,4){$\scriptstyle 2$};
		\node at (5,4){$\scriptstyle 1$};
		\draw (3,3)--(3,2)--(4,1)--(4,0);
		\draw (5,3)--(5,2)--(4,1)--(4,0);
		
		\draw[myptr] (6,2)--(8,2);
		
		\node at (9,4){$\scriptstyle 1$};
		\node at (11,4){$\scriptstyle 2$};
		\draw (9,3)--(9,2)--(10,1)--(10,0);
		\draw (11,3)--(11,2)--(10,1)--(10,0);
		
		\node at (13.5,2){;};
		\end{tikzpicture} \ \ 
		\begin{tikzpicture}[scale=.2]
		\node at (4.8,4.2){};
		\node at (4.8,-.2){};
		
		\draw (4,4)--(4,2)--(5,1)--(5,0);
		\draw (7,4)--(7,3)--(5,1);
		\draw (5,4)--(5,3)--(6,2);
		
		\draw[myptr] (8,2)--(10,2);
		
		\draw (13,4)--(13,3)--(12,2);
		\draw (11,4)--(11,3)--(13,1)--(13,0);
		\draw (14,4)--(14,2)--(13,1)--(13,0);
		
		\node at (16,2){;};
		\end{tikzpicture}\ \ 
		\begin{tikzpicture}[scale=.2]
		\node at (4.8,-4.2){};
		\node at (4.8,.2){};
		
		\draw (4,-4)--(4,-2)--(5,-1)--(5,0);
		\draw (7,-4)--(7,-3)--(5,-1);
		\draw (5,-4)--(5,-3)--(6,-2);
		
		\draw[myptr] (8,-2)--(10,-2);
		
		\draw (13,-4)--(13,-3)--(12,-2);
		\draw (11,-4)--(11,-3)--(13,-1)--(13,0);
		\draw (14,-4)--(14,-2)--(13,-1)--(13,0);
		
		\node at (16,-2){;};
		\end{tikzpicture}\ \ 
		\begin{tikzpicture}[scale=.2]
		\draw (-2,4)--(-1,3)--(-1,1)--(-2,0);
		\draw (0,4)--(-1,3);
		\draw (-1,1)--(0,0);
		
		\draw[myptr] (.7,2)--(2.7,2);
		
		\draw (5,4)--(5,3)--(4,2)--(4,0);
		\draw (5,3)--(7,1)--(7,0);
		\draw (8,4)--(8,2)--(7,1)--(7,0);
		
		\node at (9,2){$\scriptstyle +$};
		
		\draw (10,4)--(10,2)--(11,1)--(11,0);
		\draw (13,4)--(13,3)--(11,1);
		\draw (13,3)--(14,2)--(14,0);
		
		\node at (4.8,4.2){};
		\node at (15.3,-.2){};
		\end{tikzpicture}
		\end{aligned}}
	\end{equation*}
	Following the terminology of Gr\"obner basis \cite{dotsenko2010grobner,loday2012algebraic}, a \textbf{critical monomial} associated to these rewriting rules is defined to be the composition of two elements in the domain of the rules. A critical monomial is said to be \textbf{confluent} if the order in which the rewriting is performed does not affect the outcome. 
	
	\begin{lemma} \label{All critical monomials are confluent}
		The rewriting rules above yield only confluent critical monomials.
	\end{lemma}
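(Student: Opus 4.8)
The plan is to carry out the confluence check directly: I would enumerate the critical monomials -- the graphs in the free prop in which the domains of two of the nine rewriting rules overlap along a shared vertex -- and, for each such monomial, reduce it by each of the two applicable rules, continue reducing until no rule applies, and check that the two resulting normal forms agree. Because the domain of every rule is supported on at most three vertices (the associativity, coassociativity and Leibniz rules, with their product--product, coproduct--coproduct and product--coproduct patterns, being the largest), the overlaps are of bounded size; I would obtain the complete list by running through each ordered pair of rules and recording the ways a vertex of one domain may be identified with a vertex of the other inside a connected graph. One first notes that each rule is equivariant for the two $\Sigma$-actions, so that applying a rule at a prescribed occurrence inside a composite graph is unambiguous.

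The overlaps organize into a few families. Those among the product--counit rule and the two counit rules, and of these with commutativity, yield monomials both of whose branches reduce at once to $0$ or to the identity, so confluence is immediate. The self-overlap of associativity is Mac Lane's pentagon and that of coassociativity its dual; the overlaps of associativity and of coassociativity with commutativity, and with the counit rules, are the standard coherences of a (co)commutative, (co)associative, (co)unital (co)product. All of these are classical and require nothing new.

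The substantive cases are the overlaps involving the Leibniz rule: Leibniz against involution, against the twisted involution, against associativity, against coassociativity, and against each of the three (co)unit rules. In such a monomial, one branch applies Leibniz and replaces a coproduct-after-product by its two-term expansion; one then transports each term, using associativity, coassociativity and commutativity, until a coproduct feeding both inputs of a single product -- or a counit sitting beneath a product or coproduct -- is exposed, applies the corresponding annihilation or collapse rule, recombines, and compares with the reduct produced by the other branch. Since the product has homological degree $1$, Koszul signs are generated at each transport, and the crucial point is that these signs conspire so that the two branches land on the same element. I expect this sign bookkeeping -- together with verifying that no Leibniz-type overlap has been overlooked -- to be the genuine obstacle; once every monomial on this finite list is checked, the lemma follows.
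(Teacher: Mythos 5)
Your strategy is exactly the paper's: treat the nine rules as a rewriting system, enumerate the overlaps of their domains, observe that the overlaps not involving the Leibniz rule are the classical coherences of a (co)commutative, (co)associative, (co)unital structure, and isolate the Leibniz overlaps --- against the involution rules, against (co)associativity, and against the (co)unit rules --- as the substantive cases. This matches the paper, which draws out precisely the confluence diagrams for Leibniz--involution, Leibniz--counitality, Leibniz--coassociativity, and Leibniz--associativity and dismisses the remaining pairs as straightforward.

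The gap is that you stop where the proof begins. For a lemma whose entire content is a finite verification, writing ``I would reduce each branch and check that the normal forms agree'' and ``I expect the sign bookkeeping to be the genuine obstacle'' is a description of the proof, not the proof: the four Leibniz-overlap diagrams are the whole argument, and you have not produced any of them. In particular, the sign issue you flag as the crux is left entirely open --- you have not verified even one instance that the Koszul signs generated by transporting terms past the degree-$1$ product ``conspire'' as you hope; if they did not, the lemma would simply be false and the presentation of $\mathcal{MS}$ would need correction. (In fact the relations of $\mathcal{MS}$ are stated with explicit signs over a general ring $k$, so this is not automatically absorbed by working over $\F_2$.) To complete the argument you must actually carry out the two reduction branches for each Leibniz overlap --- e.g.\ for Leibniz composed with the involution graph, reduce once via Leibniz and once via involution and confirm both branches terminate at the same (zero or two-term) expression --- exactly as in the displayed diagrams of the paper's proof.
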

	
	\begin{proof}
		We start by checking the confluence of the critical monomials associated to the composing of the Leibniz relation and the involution relation
		\begin{equation*}
		\boxed{
			\begin{tikzpicture}[scale=.27]
			\node at (0,12) {};
			\draw (3,10)--(4,11)--(4,12)--(3,13)--(4,14)--(4,15);
			\draw (5,10)--(4,11)--(4,12)--(5,13)--(4,14);
			
			\draw (9,6)--(9,7)--(8,8)--(8,9)--(9,10)--(9,11);
			\draw (9,7)--(10,8)--(10,9)--(9,10);
			\draw (11,6)--(11,7)--(10,8);
			
			\draw (12,6)--(12,7)--(13,8)--(13,9)--(14,10)--(14,11);
			\draw (14,6)--(14,7)--(13,8);
			\draw (14,7)--(15,8)--(15,9)--(14,10);
			
			\draw (1,0)--(1,1)--(0,2)--(1,3)--(2,4)--(2,5);
			\draw (1,1)--(2,2)--(1,3);
			\draw (3,0)--(3,3)--(2,4);
			
			\draw (5,0)--(5,3)--(6,4)--(6,5);
			\draw (7,0)--(7,1)--(6,2)--(7,3)--(6,4);
			\draw (7,1)--(8,2)--(7,3);
			
			\draw[myptr] (2,6)--(1,7);	
			\draw[myptr] (7,7)--(6,6);	
			\draw[myptr] (6,11)--(7,10);	
			\draw[myptr] (2,11)--(1,10);
			
			\node at (-1.5,8.5){\Large 0};		
			\node at (11.5,8.5){\Large +};
			\node at (4,2.5){\Large +};
			
			\node at (-3,3){};
			\node at (15,-.5){};
			\end{tikzpicture}
			\qquad
			\begin{tikzpicture}[scale=.3]
			\node at (0,14) {};
			\draw (3,13.5)--(4,12.5)--(4,11.5)--(3,10.5)--(4,9.5)--(4,8.5);
			\draw (5,13.5)--(4,12.5)--(4,11.5)--(5,10.5)--(4,9.5);
			
			\draw (9,11)--(9,10)--(8,9)--(8,8)--(9,7)--(9,6);
			\draw (9,10)--(10,9)--(10,8)--(9,7);
			\draw (11,11)--(11,10)--(10,9);
			
			\draw (12,11)--(12,10)--(13,9)--(13,8)--(14,7)--(14,6);
			\draw (14,11)--(14,10)--(13,9);
			\draw (14,10)--(15,9)--(15,8)--(14,7);
			
			\draw (1,5)--(1,4)--(0,3)--(1,2)--(2,1)--(2,0);
			\draw (1,4)--(2,3)--(1,2);
			\draw (3,5)--(3,2)--(2,1);
			
			\draw (5,5)--(5,2)--(6,1)--(6,0);
			\draw (7,5)--(7,4)--(6,3)--(7,2)--(6,1);
			\draw (7,4)--(8,3)--(7,2);
			
			\draw[myptr] (2,6)--(1,7);	
			\draw[myptr] (7,7)--(6,6);	
			\draw[myptr] (6,11)--(7,10);	
			\draw[myptr] (2,11)--(1,10);
			
			\node at (-1.5,8.5){\Large 0};		
			\node at (11.5,8.5){\Large +};
			\node at (4,2.5){\Large +};
			
			\node at (-4,3){};
			\node at (15,3){};
			\end{tikzpicture} 
		}
		\end{equation*}
		We do the same for the counitality and Leibniz relations
		\begin{equation*}
		\boxed{
			\begin{tikzpicture}[scale=.3]
			\draw (0,0)--(0,1)--(1,2)--(1,3);
			\draw (2,0)--(2,1)--(1,2);
			\draw (2,1)--(3,2)--(3,3);
			
			\draw (7,4)--(7,5)--(6,6)--(6,7);
			\draw (7,5)--(8,6)--(8,7);
			\draw (9,4)--(9,5)--(8,6);
			
			\draw (11,4)--(11,5)--(12,6)--(12,7);
			\draw (13,4)--(13,5)--(12,6);
			\draw (13,5)--(14,6)--(14,7);
			
			\draw (1,7)--(2,8)--(2,9)--(1,10);
			\draw (3,7)--(2,8)--(2,9)--(3,10);
			
			\draw (-3,4)--(-3,5)--(-4,6)--(-4,7);
			\draw (-3,5)--(-2,6)--(-2,7);
			
			\draw[myptr] (0,3)--(-1,4);	
			\draw[myptr] (5,4)--(4,3);	
			\draw[myptr] (4,8)--(5,7);	
			\draw[myptr] (0,8)--(-1,7);
			
			\node at (10,6){\Large +};
			
			\draw[fill] (1,7) circle (1ex);
			\draw[fill] (7,4) circle (1ex);
			\draw[fill] (0,0) circle (1ex);
			\draw[fill] (11,4) circle (1ex);
			
			\node at (-4,10){};
			\node at (14,0){};
			\end{tikzpicture}
			\qquad\quad\,
			\begin{tikzpicture}[scale=.3]
			\draw (2,0)--(2,1)--(1,2)--(1,3);
			\draw (2,1)--(3,2)--(3,3);
			\draw (4,0)--(4,1)--(3,2);
			
			\draw (7,4)--(7,5)--(6,6)--(6,7);
			\draw (7,5)--(8,6)--(8,7);
			\draw (9,4)--(9,5)--(8,6);
			
			\draw (11,4)--(11,5)--(12,6)--(12,7);
			\draw (13,4)--(13,5)--(12,6);
			\draw (13,5)--(14,6)--(14,7);
			
			\draw (1,7)--(2,8)--(2,9)--(1,10);
			\draw (3,7)--(2,8)--(2,9)--(3,10);
			
			\draw (-3,4)--(-3,5)--(-4,6)--(-4,7);
			\draw (-3,5)--(-2,6)--(-2,7);
			
			\draw[myptr] (0,3)--(-1,4);	
			\draw[myptr] (5,4)--(4,3);	
			\draw[myptr] (4,8)--(5,7);	
			\draw[myptr] (0,8)--(-1,7);
			
			\node at (10,6){\Large +};
			
			\draw[fill] (3,7) circle (1ex);
			\draw[fill] (9,4) circle (1ex);
			\draw[fill] (4,0) circle (1ex);
			\draw[fill] (13,4) circle (1ex);
			
			\node at (-4,10){};
			\node at (14,0){};
			\end{tikzpicture}		
		}
		\end{equation*}
		the coassociative and Leibniz relations
		\begin{equation*}
		\boxed{
			\begin{tikzpicture}[scale=.27]
			\draw (-1.5,14)--(-3.5,16)--(-3.5,17)--(-2.5,18);
			\draw (-3.5,14)--(-4.5,15)--(-3.5,16)--(-3.5,17)--(-4.5,18);						
			\draw (-5.5,14)--(-4.5,15);
			
			\draw (1.5,14)--(3.5,16)--(3.5,17)--(2.5,18);
			\draw (3.5,14)--(4.5,15)--(3.5,16)--(3.5,17)--(4.5,18);						
			\draw (5.5,14)--(4.5,15);
			
			\draw (-7,7)--(-7,9)--(-6,10)--(-6,11);
			\draw (-7,9)--(-8,10)--(-8,11);
			\draw (-8,7)--(-9,8)--(-9,9)--(-8,10);
			\draw (-10,7)--(-9,8);
			
			\draw (-12,7)--(-12,9)--(-13,10)--(-13,11);
			\draw (-13,7)--(-14,8)--(-14,9)--(-13,10);
			\draw (-15,7)--(-14,8);
			\draw (-14,9)--(-15,10)--(-15,11);
			
			\draw (-4,0)--(-5,1)--(-5,2)--(-6,3)--(-6,4);
			\draw (-6,0)--(-5,1);
			\draw (-7,0)--(-7,2)--(-6,3);
			\draw (-7,2)--(-8,3)--(-8,4);
			
			\draw (-2,0)--(-2,2)--(-1,3)--(-1,4);
			\draw (0,0)--(0,2)--(-1,3);
			\draw (0,2)--(1,3)--(1,4);			
			\draw (2,0)--(2,2)--(1,3);
			
			\draw (4,0)--(5,1)--(5,2)--(6,3)--(6,4);
			\draw (6,0)--(5,1);
			\draw (7,0)--(7,2)--(6,3);
			\draw (7,2)--(8,3)--(8,4);
			
			\draw (7,7)--(7,9)--(6,10)--(6,11);
			\draw (7,9)--(8,10)--(8,11);
			\draw (8,7)--(9,8)--(9,9)--(8,10);
			\draw (10,7)--(9,8);
			
			\draw (12,7)--(12,9)--(13,10)--(13,11);
			\draw (13,7)--(14,8)--(14,9)--(13,10);
			\draw (15,7)--(14,8);
			\draw (14,9)--(15,10)--(15,11);
			
			\draw[myptr] (7,14)--(9,12);	
			\draw[myptr] (9,6)--(8,5);	
			\draw[myptr] (-7,14)--(-9,12);	
			\draw[myptr] (-9,6)--(-8,5);
			\draw[myptr] (1,16.5)--(-1,16.5);
			
			\node at (10.5,9.5){\Large +};
			\node at (-10.5,9.5){\Large +};
			\node at (3.5,2.5){\Large +};
			\node at (-3.5,2.5){\Large +};
			
			\node at (15.5,18){};
			\node at (-15.5,0){};
			\end{tikzpicture}
		}
		\end{equation*}
		and the associative and Leibniz relations
		\begin{equation*}
		\boxed{
			\begin{tikzpicture}[scale=.27]
			\draw (-1.5,18)--(-3.5,16)--(-3.5,15)--(-2.5,14);
			\draw (-3.5,18)--(-4.5,17)--(-3.5,16)--(-3.5,15)--(-4.5,14);						
			\draw (-5.5,18)--(-4.5,17);
			
			\draw (1.5,18)--(3.5,16)--(3.5,15)--(2.5,14);
			\draw (3.5,18)--(4.5,17)--(3.5,16)--(3.5,15)--(4.5,14);						
			\draw (5.5,18)--(4.5,17);
			
			\draw (-7,11)--(-7,9)--(-6,8)--(-6,7);
			\draw (-7,9)--(-8,8)--(-8,7);
			\draw (-8,11)--(-9,10)--(-9,9)--(-8,8);
			\draw (-10,11)--(-9,10);
			
			\draw (-12,11)--(-12,9)--(-13,8)--(-13,7);
			\draw (-13,11)--(-14,10)--(-14,9)--(-13,8);
			\draw (-15,11)--(-14,10);
			\draw (-14,9)--(-15,8)--(-15,7);
			
			\draw (-4,4)--(-5,3)--(-5,2)--(-6,1)--(-6,0);
			\draw (-6,4)--(-5,3);
			\draw (-7,4)--(-7,2)--(-6,1);
			\draw (-7,2)--(-8,1)--(-8,0);
			
			\draw (-2,4)--(-2,2)--(-1,1)--(-1,0);
			\draw (0,4)--(0,2)--(-1,1);
			\draw (0,2)--(1,1)--(1,0);			
			\draw (2,4)--(2,2)--(1,1);
			
			\draw (4,4)--(5,3)--(5,2)--(6,1)--(6,0);
			\draw (6,4)--(5,3);
			\draw (7,4)--(7,2)--(6,1);
			\draw (7,2)--(8,1)--(8,0);
			
			\draw (7,11)--(7,9)--(6,8)--(6,7);
			\draw (7,9)--(8,8)--(8,7);
			\draw (8,11)--(9,10)--(9,9)--(8,8);
			\draw (10,11)--(9,10);
			
			\draw (12,11)--(12,9)--(13,8)--(13,7);
			\draw (13,11)--(14,10)--(14,9)--(13,8);
			\draw (15,11)--(14,10);
			\draw (14,9)--(15,8)--(15,7);
			
			\draw[myptr] (7,14)--(9,12);	
			\draw[myptr] (9,6)--(8,5);	
			\draw[myptr] (-7,14)--(-9,12);	
			\draw[myptr] (-9,6)--(-8,5);
			\draw[myptr] (1,16.5)--(-1,16.5);
			
			\node at (10.5,9.5){\Large +};
			\node at (-10.5,9.5){\Large +};
			\node at (3.5,2.5){\Large +};
			\node at (-3.5,2.5){\Large +};
			
			\node at (15.5,18){};
			\node at (-15.5,0){};
			\end{tikzpicture}
		}
		\end{equation*}
		The remaining pairs of relations are straightforward and left to the interested reader. 
	\end{proof}
	
	\begin{lemma} \label{Basis of surjection-like elements}
		A linear basis of $U(\MS)$ is represented by all surjection-like elements containing no copy of the involution graph
		\begin{equation*}
		\boxed{\begin{tikzpicture}[scale=.25]
		\draw (1,4)--(1,3)--(0,2)--(1,1)--(1,0);
		\draw (1,3)--(2,2)--(1,1);
		\end{tikzpicture}}
		\end{equation*}
	\end{lemma}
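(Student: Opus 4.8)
The plan is to deduce the statement from Lemma~\ref{All critical monomials are confluent} by a Gr\"obner basis argument. First I would equip the rewriting system generated by the rules above with a well-founded monomial order on the free prop $F(G)$ on $\counit,\coproduct,\product$ for which every rule is strictly decreasing, so that the system is terminating. The five rules with left-hand sides $\counit\circ\product$, $(\counit\otimes\identity)\circ\coproduct$, $(\identity\otimes\counit)\circ\coproduct$, $\product\circ\coproduct$ and its twisted variant either delete a vertex or rewrite to $0$, hence are automatically decreasing; for commutativity, (co)associativity and Leibniz, which preserve the number of vertices, I would compare monomials lexicographically by, in order: the number of $\product$-vertices lying above a $\coproduct$-vertex (i.e.\ joined to it by a directed path), then a ``comb defect'' measuring the distance of the $\product$- and $\coproduct$-subgraphs from left-comb form, then the number of out-of-order input pairs at $\product$-vertices. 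Then Leibniz decreases the first coordinate term by term --- it moves a $\coproduct$ from below a $\product$ to above it, creating no new such pair --- the two (co)associativity rules decrease the second, and commutativity the third; the order is arranged to be compatible with horizontal and vertical composition. With termination secured, Lemma~\ref{All critical monomials are confluent} (local confluence, via the critical monomials) together with Newman's lemma yields confluence, and the diamond lemma then shows that the normal forms --- the monomials containing no left-hand side of any rewriting rule --- descend to a $k$-linear basis of $\MS$, and so, in biarity $(1,m)$, to a basis of $U(\MS)$. The differential on the generators is irrelevant to this linear computation.

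It then remains to identify the normal forms in $\MS(1,m)$. A normal form has no $\counit$ fed by a $\product$- or a $\coproduct$-vertex (the first three rules); since there is a single input, a $\counit$ could then only be fed directly by that input, which forces $m=0$. So for $m\geq 1$ there is no $\counit$, and for $m=0$ the only normal form is $\counit$ itself. For $m\geq1$ a normal form has no $\product$-vertex above a $\coproduct$-vertex --- a short argument using the Leibniz rule and the $\counit\circ\product$ rule excludes even non-adjacent such pairs --- so, following the single input downward, the $\coproduct$-vertices assemble into one left-comb $\coproduct^{(n)}$ ((co)associativity), whose $n$ outputs are wired into $m$ left-comb $\product$'s; the absence of any copy of $\involution$ (the rules $\product\circ\coproduct\rightsquigarrow0$ and its twist) forbids a $\coproduct$-vertex from feeding both inputs of a $\product$-vertex, and commutativity forces the wiring to be order preserving into each $\product$. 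By Definition~\ref{definition: surjection-like element} this is precisely a surjection-like element with no copy of $\involution$; conversely, such an element is counit-free, has its $\product$- and $\coproduct$-subgraphs in left-comb form, has no $\product$-vertex above a $\coproduct$-vertex, no $\coproduct$-vertex feeding both inputs of a $\product$-vertex, and ordered $\product$-inputs, so none of the nine rules applies and it is a normal form. This identifies the normal forms with the set in the statement.

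The main obstacle I foresee is two-fold. The first is making the monomial order precise enough that Leibniz and (co)associativity --- which do not lower the vertex count --- are genuinely decreasing, and simultaneously compatible with the prop operations, so that the Gr\"obner/diamond-lemma machinery (in the sense of \cite{dotsenko2010grobner, loday2012algebraic}) applies; this is routine but must be set up with care. The second is the combinatorial bookkeeping ensuring that a surjection-like element without a copy of $\involution$ admits no rewriting under \emph{any} of the nine rules --- the subtlest point being commutativity at an internal $\product$-vertex of a left comb, which is harmless precisely because the order was chosen to make the ordered left-comb representatives minimal.
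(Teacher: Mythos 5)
Your proposal is correct and follows essentially the same route as the paper: orient the relations of $\MS$ as rewriting rules, invoke Lemma~\ref{All critical monomials are confluent} to obtain unique normal forms, and identify those normal forms with the surjection-like elements containing no copy of the involution graph. If anything your write-up is more complete, since you make explicit the well-founded monomial order guaranteeing termination --- a step the paper leaves implicit, although local confluence of critical monomials alone would not yield uniqueness of normal forms without it.
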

	
	\begin{proof}
		We begin by noticing that, by definition, a surjection-like element contains no copies of 
	\begin{equation*}
	\boxed{\ \begin{tikzpicture}[scale=.2]
		\draw (0,5)--(0,4)--(-1,3);
		\draw (-1,3) to (-.2,2.6);
		\draw (1,2)--(.2,2.4);
		\draw (1,2)--(0,1)--(0,0);
		\draw (0,4)--(1,3)--(-1,2)--(0,1);
		\end{tikzpicture}	
		\qquad
		\begin{tikzpicture}[scale=.2]
		\draw (-1,5)--(-1,3)--(0,2)--(0,0);
		\draw (1,5)--(1,3)--(0,2);
		\draw [fill] (0,.2) circle [radius=0.3];
		\end{tikzpicture}
		\qquad
		\begin{tikzpicture}[scale=.2]
		\draw (-1,5)--(-1,4);
		\draw (0,2)--(0,0);
		\draw (1,5)--(1,4)--(-1,3)--(0,2);
		\draw [fill] (0,.2) circle [radius=0.3];
		
		\draw (-1,4) to (-.2,3.6);
		\draw (0,2)--(1,3)--(.2,3.4);
		\end{tikzpicture}\ }
	\end{equation*}
		Therefore, since there are no other relations due to Lemma \ref{All critical monomials are confluent}, a non-zero linear combination of surjection-like elements is mapped to $0$ via $U(\S) \to U(\MS)$, if and only if each surjection-like element contains a copy of the involution graph. 
		
		The fact that each critical monomial of our rewriting rules is confluent, implies that applying them to an arbitrary representative of an element in $U(\MS)$ will produce a unique preferred representative. It is straightforward to see that such representative is a linear combination of surjection-like elements.
	\end{proof}

	\subsection{Comparing the Surjection operad and $U(\MS)$} 
	
	The following definition is due to McClure-Smith and Berger-Fresse. We refer to the resulting operad as the \textbf{Surjection operad}. We present it over the field with two elements $\F_2$ and remit the interested reader to \cite{mcclure2003multivariable} and \cite{berger2004combinatorial} for their respective sign conventions. 
	
	\begin{definition}\label{Sur}
		For a fixed $m\geq1$, consider the free differential graded $\F_2$-module with a basis given by all functions $s:\overline{n}\to\overline{m}$ with any $n\geq1$. The degree of a basis element $s:\overline{n}\to\overline{m}$ is $(n-m)$ and its differential is defined to be $$\partial s=\sum_{k=1}^{n}s\circ\iota_k$$ where $\iota_k:\overline{n-1} \to \overline{n}$ is the order preserving injection that misses $k$. Consider the free differential graded $\F_2$-submodule generated by functions $s:\overline{n}\to\overline{m}$ which are either non-surjective or for which $s(i)$ equals $s(i+1)$ for some $i$. Define $Sur(m)$ to be the associated quotient. The collection $Sur=\{Sur(m)\}_{m\geq1}$ is a $\Sigma$-module with the action of $\Sigma_m$ on $Sur(m)$ given by postcomposition. The $\Sigma$-module $Sur$ is an operad with partial composition $\circ_r: Sur(m')\tensor Sur(m)\to Sur(m+m'-1)$ defined on two generators $s:\overline{n}\to\overline{m}$ and $s':\overline{n'}\to\overline{m'}$ as follows. Represent the surjections $s$ and $s'$ by sequences $(s(1),\dots,s(n))$ and $(s'(1),\dots,s'(n'))$ and suppose that $r$ appears $k$ times in the sequence representing $s'$ as $s'(i_1),\dots,s'(i_k)$. Denote the set of all tuples
		\begin{equation*}
		1 = j_0 \leq j_1 \leq \dots \leq j_k = n
		\end{equation*}
		by $J(k,n)$ and for each such tuple consider the subsequences 
		\begin{equation*}
		(s(j_0),\dots,s(j_1))\ \ \ (s(j_1),\dots,s(j_2))\ \ \cdots\ \ (s(j_{k-1}),\dots,s(j_k)).
		\end{equation*}
		Then, in $(s'(1),\dots,s'(n))$, replace the term $s'(i_t)$ by the sequence $(s(j_{t-1}),\dots,s(j_t))$. In addition, increase the terms $s(j)$ by $r-1$ and the terms $s'(i)$ such that $s'(i)>r$ by $m-1$. The surjection $s'\circ_r s$ is represented by the sum, parametrized by $J(k,n)$, of these resulting sequences.
	\end{definition}
	
	\begin{theorem}\label{Isomorphism MS and Sur}
		The assignment mapping a surjection-like element to its associated surjection induces an isomorphism from $U(\MS)$ to $Sur$.
	\end{theorem}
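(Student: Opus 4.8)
The strategy is to leverage Lemma~\ref{Basis of surjection-like elements}, which presents $U(\MS)$ in terms of the same combinatorial data ($\F_2$-combinations of non-degenerate surjections) that presents $Sur$, and then to check that the assignment "associated surjection" respects the differential and the operadic composition. So the first step is to make the correspondence of bases precise. I would check that a surjection-like element contains no copy of the involution graph if and only if its associated surjection $s$ is \emph{non-degenerate}, i.e. $s(j)\neq s(j+1)$ for all $j$: in left-comb form the two outputs of the innermost coproduct of the big coproduct are outputs $1$ and $2$, and, using coassociativity and associativity, one may rebracket so that any consecutive pair $j,j+1$ of outputs of the big coproduct is produced by a single coproduct and, if $s(j)=s(j+1)$, consumed by a single product of the comb $s(j)$, exhibiting a $\product\circ\coproduct$ redex; conversely a surjection-like element in left-comb form with non-degenerate $s$ has no Leibniz, involution, (co)associativity, commutativity, or counitality redex, so it is a normal form. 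Since the comb $\overline k_r$ is forced to be $s^{-1}(r)$ with its order-preserving parametrization, $s\mapsto(\text{surjection-like element for }s)$ inverts "associated surjection", and with Definition~\ref{Sur} this gives a $\Sigma_m$-equivariant linear isomorphism $\overline\Phi\colon U(\MS)(m)\xrightarrow{\ \cong\ }Sur(m)$ for each $m\ge 1$, under which degenerate surjection-like elements and degenerate surjections both correspond to $0$.

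Next I would verify $\overline\Phi\partial=\partial\overline\Phi$. The prop differential is a derivation with $\partial\counit=\partial\coproduct=0$ and $\partial\product=(\counit\tensor\mathrm{id})-(\mathrm{id}\tensor\counit)$, while in $\S$ one has $\counit\circ\product=0$ and $(\counit\tensor\mathrm{id})\circ\coproduct=\mathrm{id}=(\mathrm{id}\tensor\counit)\circ\coproduct$. Applying $\partial$ to the surjection-like element for $s\colon\overline n\to\overline m$ therefore amounts to deleting, one at a time, each input of a bottom comb having at least two inputs, and — after collapsing the corresponding output of the big coproduct by counitality — the result is the surjection-like element for $s\circ\iota_j$ (each deletion occurring exactly once over $\F_2$). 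The indices $j$ that do not arise, namely those for which $j$ is the unique preimage of $s(j)$, are precisely those for which $s\circ\iota_j$ is non-surjective, hence $0$ in $Sur$; this matches $\partial s=\sum_{k=1}^n s\circ\iota_k$.

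Finally, and this is the heart of the proof, I would show $\overline\Phi$ is an operad morphism. Inserting the surjection-like element for $s\colon\overline n\to\overline m$ into the $r$-th output of the one for $s'\colon\overline{n'}\to\overline{m'}$ produces a graph whose only non-normalized feature is a block $\coproduct^{(n)}\circ\product^{(k)}$ with $k=|s'^{-1}(r)|$, where the left-comb product of comb $r$ of $s'$ meets the big left-comb coproduct of $s$. Iterating the Leibniz rewriting rule $\coproduct\circ\product\mapsto(\mathrm{id}\tensor\product)\circ(\coproduct\tensor\mathrm{id})+(\product\tensor\mathrm{id})\circ(\mathrm{id}\tensor\coproduct)$ together with (co)associativity rewrites this block as a sum indexed by the tuples $1=j_0\le j_1\le\dots\le j_k=n$ of $J(k,n)$, the $t$-th term of a tuple governing which outputs $j_{t-1},\dots,j_t$ of $\coproduct^{(n)}$ are routed into the $t$-th input of $\product^{(k)}$; reassembling with the unchanged parts (the $s'$-coproduct and the combs of $s'$ for outputs $\neq r$, those for outputs $>r$ shifted by $m-1$, together with the combs of $s$ shifted by $r-1$) and straightening the outcome to a single big coproduct followed by combs yields exactly the surjection-like elements of the surjections $s'\circ_r s$ of Definition~\ref{Sur}. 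The main obstacle is carrying out this last step cleanly: matching the $J(k,n)$-indexed Leibniz expansion of $\coproduct^{(n)}\circ\product^{(k)}$ with the surjection-operad formula term by term, and confirming that the global straightening to normal form neither creates nor destroys terms — any configuration whose surjection is degenerate being consistently zero on both sides. With this verified, $\overline\Phi$ is an isomorphism of differential graded operads, proving the theorem.
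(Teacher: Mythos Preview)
Your proposal is correct and follows essentially the same route as the paper: a basis bijection via Lemma~\ref{Basis of surjection-like elements}, easy checks for degree, differential, and $\Sigma$-action, and then the main work of showing that the block $\coproduct^{(n)}\circ\product^{(k)}$ expands, via iterated Leibniz, as a sum indexed by $J(k,n)$. The only organizational difference is that the paper isolates this last computation as a standalone ``splitting lemma'' (rewriting $\coproduct^{(n)}\circ\product^{(k)}$ as a sum over a set $A(k,n)$ in bijection with $J(k,n)$) and proves it by a clean double induction on $n$ and $k$, whereas you sketch the iteration directly; the content is the same.
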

	
	We start by noticing that a surjection-like element is sent to a surjection with a pair of consecutive integers having the same value if and only if a copy of the involutive graph is contained in it. As described in \mbox{Lemma \ref{Basis of surjection-like elements}}, the surjection-like elements containing no copies of the involution graph represent a basis of $\MS$. The assignment then induces a bijection between linear bases of $U(\MS)$ and $Sur$. This bijection is easily seen to respect degree, differential and symmetric action. The fact it respect operadic composition will follow from the next result.
	\begin{lemma}\label{splitting lemma}
		For any pair $k,n\geq 1$ denote by $A(k,n)$ the set of all sequences $a=\langle a_1,\dots,a_k\rangle$ of non-negative integers satisfying $1+a_1+\dots+a_k=n$ and for any such sequence $a=\langle a_1,\dots,a_k\rangle$ consider the element in $(\MS)(k,n)$ defined by
		\begin{center}
			\boxed{\begin{tikzpicture}[scale=.25]
				\draw (0,5)--(0,4)--(1,3);
				\draw (4,5)--(4,4)--(3,3);
				\node at (2,2.5){$\scriptstyle \langle a\rangle$};
				\draw (1,2)--(0,1)--(0,0); 
				\draw (3,2)--(4,1)--(4,0);
				\node at (5.2,2.5){\, $\stackrel{\text{def}}{=}$};
				
				\draw (8.5,5)--(8.5,3);
				\node at (8.5,2.5){$\scriptstyle a_1$};
				\draw (8,2)--(7,1)--(7,0);
				\draw (9,2)--(10,1);
				\draw (11.5,5)--(11.5,3);
				\node at (11.5,2.5){$\scriptstyle a_2$};
				\draw (11,2)--(10,1)--(10,0);
				\draw (12,2)--(13,1);
				\draw (14,2)--(13,1)--(13,0);
				\node at (14.7,2.5){$\scriptstyle \cdots$};
				\draw (15,2)--(16,1);
				\draw (17.5,5)--(17.5,3);
				\node at (17.5,2.5){$\scriptstyle a_r$};
				\draw (17,2)--(16,1)--(16,0);
				\draw (18,2)--(19,1)--(19,0);
				\end{tikzpicture}}
		\end{center}
		Then, for any pair $k,n\geq1$ we have the following identity in $\MS$
		\begin{center}
			\boxed{\begin{tikzpicture}[scale=.25]								
				\draw (.5,6)--(1.5,5);
				\node at (2,4.5){$\scriptstyle k$};
				\draw (3.5,6)--(2.5,5);
				\draw (2,4)--(2,3);
				\node at (2,2.5){$\scriptstyle n$};
				\draw (1.5,2)--(.5,1);
				\draw (2.5,2)--(3.5,1);
				\node at (6,3.5){$=$};
				\end{tikzpicture}
				\begin{tikzpicture}[scale=.25]
				\draw (0,5)--(0,4)--(1,3);
				\draw (4,5)--(4,4)--(3,3);
				\node at (2,2.5){$\scriptstyle \langle a\rangle$};
				\draw (1,2)--(0,1)--(0,0); 
				\draw (3,2)--(4,1)--(4,0);
				
				\node at (-3,2.5){$\displaystyle{\sum_{a\in A(k,n)}^{\phantom{n+1}}}$};
				
				\end{tikzpicture}}			
		\end{center} 
	\end{lemma}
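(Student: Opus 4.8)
The plan is to prove the identity by two nested inductions: first the case $k=2$ by induction on $n$, then the general case by induction on $k$. Throughout I write $\mu$ for the binary product $\product$, $\Delta$ for the binary coproduct $\coproduct$, and $\mu^{(k)}\in\MS(k,1)$, $\Delta^{(n)}\in\MS(1,n)$ for their iterates in left comb order (Notation~\ref{notation: higher valence}), with $\mu^{(1)}=\mathrm{id}=\Delta^{(1)}$; the left-hand side of the asserted identity is then, by definition, the vertical composite $\Delta^{(n)}\circ\mu^{(k)}$, and for a sequence $(c_1,\dots,c_j)$ I write $\langle c_1,\dots,c_j\rangle$ for the corresponding element built as in the statement. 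I would begin by recording the two recursions that hold in $\MS$ by associativity and coassociativity alone, namely $\mu^{(k)}=\mu\circ(\mu^{(k-1)}\tensor\mathrm{id})$ and $\Delta^{(n)}=(\Delta\tensor\mathrm{id}^{\tensor(n-2)})\circ\Delta^{(n-1)}$.

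For the inner induction the base $n=1$ is the observation that $A(2,1)=\{\langle 0,0\rangle\}$ and $\langle 0,0\rangle=\mu=\Delta^{(1)}\circ\mu$. For the step I would write $\Delta^{(n+1)}\circ\mu=(\Delta\tensor\mathrm{id}^{\tensor(n-1)})\circ(\Delta^{(n)}\circ\mu)$, expand $\Delta^{(n)}\circ\mu=\sum_{c_1+c_2=n-1}\langle c_1,c_2\rangle$ by the inductive hypothesis, and compute the effect of post-composing each $\langle c_1,c_2\rangle$ with a copy of $\Delta$ attached to its leftmost external output. If $c_1\geq 1$ that output is the leftmost output of the $(c_1{+}1)$-valent coproduct box on the first input, so coassociativity turns it into the leftmost output of the $(c_1{+}2)$-valent box, giving $\langle c_1{+}1,c_2\rangle$. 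If $c_1=0$ the leftmost output is $\mu(x_1\tensor o)$ with $o$ the leftmost output of the $(c_2{+}1)$-valent box on the second input, and the Leibniz relation splits the diagram into $\langle 1,c_2\rangle$ (the term in which $\Delta$ hits $x_1$) plus $\langle 0,c_2{+}1\rangle$ (the term in which $\Delta$ extends that coproduct box by coassociativity). Reindexing, the $c_1=0$ contribution is $\langle 0,n\rangle+\langle 1,n-1\rangle$ and the contributions with $c_1\geq 1$ are $\langle 2,n-2\rangle,\dots,\langle n,0\rangle$, so the total is $\sum_{c_1+c_2=n}\langle c_1,c_2\rangle$, as required.

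For the outer induction, $k=1$ holds because $A(1,n)=\{\langle n-1\rangle\}$ and $\langle n-1\rangle=\Delta^{(n)}$. For $k\geq 2$ I would use $\Delta^{(n)}\circ\mu^{(k)}=(\Delta^{(n)}\circ\mu)\circ(\mu^{(k-1)}\tensor\mathrm{id})$, expand $\Delta^{(n)}\circ\mu$ by the case $k=2$, and note that precomposing $\langle c_1,c_2\rangle$ with $\mu^{(k-1)}\tensor\mathrm{id}$ replaces the $(c_1{+}1)$-valent coproduct box on its first input by the subdiagram $\Delta^{(c_1+1)}\circ\mu^{(k-1)}$, which equals $\sum_{d\in A(k-1,\,c_1+1)}\langle d\rangle$ by the outer inductive hypothesis. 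Recombining each $\langle d_1,\dots,d_{k-1}\rangle$ with the surviving $(c_2{+}1)$-valent box of $\langle c_1,c_2\rangle$ --- whose leftmost output absorbs the rightmost output of $\langle d_1,\dots,d_{k-1}\rangle$, associativity normalizing the left comb when $d_{k-1}=0$ --- yields precisely $\langle d_1,\dots,d_{k-1},c_2\rangle$, and the assignment $(\langle c_1,c_2\rangle,\langle d_1,\dots,d_{k-1}\rangle)\mapsto\langle d_1,\dots,d_{k-1},c_2\rangle$ is a bijection onto $A(k,n)$ (recover $c_2=a_k$, $c_1=\sum_{i<k}a_i$, $d=\langle a_1,\dots,a_{k-1}\rangle$). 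This finishes both inductions.

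The part I expect to be the main obstacle is the graph bookkeeping shared by the two steps: precisely identifying the leftmost external output of a left comb $\langle a\rangle$, and verifying that grafting $\mu^{(k-1)}$, post-composing with $\Delta$, and applying the Leibniz relation transform a left-comb diagram into exactly the left-comb diagrams carrying the claimed labels --- in particular in the degenerate cases $a_i=0$, where a ``box'' is an identity strand and adjacent products must be re-associated. I would isolate these as a short list of explicit diagrammatic identities in $\MS$ and check them once, after which the two inductions are purely formal.
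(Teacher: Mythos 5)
Your proof is correct and is essentially the paper's argument with the roles of the two parameters interchanged: the paper first establishes the case $n=2$ for all $k$ by induction on $k$ and then inducts on $n$, while you first establish the case $k=2$ for all $n$ by induction on $n$ and then induct on $k$. Both arguments rest on exactly the same ingredients --- the Leibniz relation driving the binary splitting, with associativity and coassociativity handling the left-comb normalizations (including the degenerate $a_i=0$ boxes you rightly flag) --- so this is the same approach up to the product/coproduct symmetry.
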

	We delay the proof of this lemma until after establishing Theorem \ref{Isomorphism MS and Sur} with its help.
	
	\begin{proof} [Proof of Theorem \ref{Isomorphism MS and Sur}]
		Because of Lemma \ref{splitting lemma}, the composition of two surjection-like elements satisfies
		\begin{center}
			\boxed{\begin{tikzpicture}[scale=.3]
				\draw (6,14)--(6,13);
				\node at (6,12.5){$\scriptstyle n$};
				\draw (5,12)--(2,11);
				\draw (7,12)--(10,11);
				\node at (3.5,8.5){$\scriptstyle \cdots$};
				\node at (8.5,8.5){$\scriptstyle \cdots$};
				
				\draw (-.5,10)--(.5,9);
				\draw (2.5,10)--(1.5,9);
				\node at (1,8.5){$\scriptstyle k_1$};
				\draw (1,8)--(1,1);
				\draw (4.5,10)--(5.5,9);
				\draw (7.5,10)--(6.5,9);
				\node at (6,8.5){$\scriptstyle k_r$};
				\draw (9.5,10)--(10.5,9);
				\draw (12.5,10)--(11.5,9);
				\node at (11,8.5){$\scriptstyle k_m$};
				\draw (11,8)--(11,1);
				
				\draw (6,8)--(6,7);
				\node at (6,6.5){$\scriptstyle n'$};
				\draw (5.5,6)--(4.5,5);
				\draw (6.5,6)--(7.5,5);
				\draw (2.5,4)--(3,3);
				\draw (4.5,4)--(4,3);
				\node at (3.5,2.5){$\scriptstyle k'_1$};
				\draw (3.5,2)--(3.5,1);
				\draw (7.5,4)--(8,3);
				\draw (9.5,4)--(9,3);
				\node at (8.5,2.5){$\scriptstyle k_{m'}'$};
				\draw (8.5,1.9)--(8.5,1);
				\node at (6,2.5){$\scriptstyle \cdots$};
				
				\node at (14,7){$=$};
				\end{tikzpicture}
				\begin{tikzpicture}[scale=.3]
				\node at (-2.5,7){$\displaystyle{\sum_{a\in A(k_r,n')}}$};
				
				\draw (6,14)--(6,13);
				\node at (6,12.5){$\scriptstyle n$};
				\draw (5,12)--(2,11);
				\draw (7,12)--(10,11);
				\node at (3.5,8.5){$\scriptstyle \cdots$};
				\node at (8.5,8.5){$\scriptstyle \cdots$};
				
				\draw (-.5,10)--(.5,9);
				\draw (2.5,10)--(1.5,9);
				\node at (1,8.5){$\scriptstyle k_1$};
				\draw (1,8)--(1,1);
				\draw (4.5,10)--(4.5,9)--(5.5,8);
				\draw (7.5,10)--(7.5,9)--(6.5,8);
				\node at (6,7.5){$\scriptstyle \langle a \rangle$};
				\draw (9.5,10)--(10.5,9);
				\draw (12.5,10)--(11.5,9);
				\node at (11,8.5){$\scriptstyle k_m$};
				\draw (11,8)--(11,1);
				
				\draw (5.5,7)--(4.5,6)--(4.4,5);
				\draw (6.5,7)--(7.5,6)--(7.5,5);
				\draw (2.5,4)--(3,3);
				\draw (4.5,4)--(4,3);
				\node at (3.5,2.5){$\scriptstyle k'_1$};
				\draw (3.5,2)--(3.5,1);
				\draw (7.5,4)--(8,3);
				\draw (9.5,4)--(9,3);
				\node at (8.5,2.5){$\scriptstyle k_{m'}'$};
				\draw (8.5,1.9)--(8.5,1);
				\node at (6,2.5){$\scriptstyle \cdots$};
				\end{tikzpicture}}
		\end{center}
		From this, the proof of Theorem \ref{Isomorphism MS and Sur} follows using the fact that for $k,m\geq1$ the set $J(k,m)$ (used for operadic composition of surjections) can be identify with $A(k,m)$ through the map sending 
		\begin{equation*}
		1 = j_0 \leq \cdots \leq j_k = m
		\end{equation*} 
		to 
		\begin{equation*}
		\big \langle j_1-j_0\,,\, \dots\,,\, j_m-j_{m-1} \big \rangle.
		\end{equation*}
		Keeping this in mind, we can unwind the definitions and verify the required composition compatibility of the map $U(\MS)\to Sur$.
	\end{proof}
	
	\begin{proof}[Proof of Lemma \ref{splitting lemma}]
		For $n=k=2$ the statement is precisely the Leibniz relation. For $n=2$ and $k>2$ we use the following inductive argument. 
		\begin{center}
			\boxed{\begin{tikzpicture}[scale=.3]
				\draw (0,7)--(0,6)--(1,5);
				\node at (2,4.5){$\scriptstyle k+1$};
				\draw  (2,4)--(2,2)--(1,1)--(1,0);
				\draw (4,7)--(4,6)--(3,5);
				\draw (2,2)--(3,1)--(3,0);
				\node at (5,3.5){=};
				\end{tikzpicture}
				\begin{tikzpicture}[scale=.3]
				\draw (0.5,7)--(0.5,6)--(1.5,5);
				\node at (2,4.5){$\scriptstyle k$};
				\draw  (2,4)--(3,3)--(3,2)--(2,1)--(2,0);
				\draw (3.5,7)--(3.5,6)--(2.5,5);
				\draw (3,2)--(4,1)--(4,0);
				\draw (5,7)--(5,5)--(3,3);
				\node at (6.5,3.5){=};
				\end{tikzpicture}
				\begin{tikzpicture}[scale=.3]
				\draw (0.5,7)--(0.5,6)--(1.5,5);
				\node at (2,4.5){$\scriptstyle k$};
				\draw  (2,4)--(2,3)--(3,2)--(3,0);
				\draw (3.5,7)--(3.5,6)--(2.5,5);
				\draw (5,7)--(5,4)--(3,2);
				\draw (5,4)--(6,3)--(6,0);
				\node at (8,3.5){+};
				\end{tikzpicture}
				\begin{tikzpicture}[scale=.3]
				\draw (0.5,7)--(0.5,6)--(1.5,5);
				\node at (2,4.5){$\scriptstyle k$};
				\draw  (2,4)--(2,2)--(1,1)--(1,0);
				\draw (3.5,7)--(3.5,6)--(2.5,5);
				\draw (2,2)--(3,1)--(3,0);
				\draw (5,7)--(5,3)--(3,1);
				\end{tikzpicture}}
		\end{center}
		which by induction equals	
		\begin{center}
			\boxed{\begin{tikzpicture}[scale=.3]
				\draw (0.5,6)--(0.5,5)--(1.5,4);
				\draw (3.5,6)--(3.5,5)--(2.5,4);
				\node at (2,3.5){$\scriptstyle k$};
				\draw (2,3)--(2,2)--(3,1)--(3,0);
				\draw (6,6)--(6,4)--(7,3)--(7,0);
				\draw (6,4)--(3,1); 
				\node at (8.5,3){+};
				\end{tikzpicture}
				\begin{tikzpicture}[scale=.3]
				\node at (-1.5,3){$\displaystyle{\sum_{i=0}^{k-1}}$};
				\draw (0,6)--(0,5)--(1,4);
				\node at (2,3.5){$\scriptstyle k-1-i$};
				\draw (2,3)--(2,0);
				\draw (4,6)--(4,5)--(3,4);
				\draw (4,5)--(5,4);
				\node at (5.5,3.5){$\scriptstyle i$};
				\draw (5.5,3)--(5.5,2)--(6.5,1)--(6.5,0);
				\draw (7,6)--(7,5)--(6,4);
				\draw (9.5,6)--(9.5,4)--(6.5,1);
				\end{tikzpicture}}
		\end{center}
		which in turns equals
		\begin{center}
			\boxed{\begin{tikzpicture}[scale=.3]
				\node at (-1.5,2.5){$\displaystyle{\sum_{i=0}^{k}}$};
				\draw (0,5)--(0,4)--(1,3);
				\node at (2,2.5){$\scriptstyle k-i$};
				\draw (2,2)--(2,0);
				\draw (4,5)--(4,4)--(3,3);
				\draw (4,4)--(5,3);
				\node at (5.5,2.5){$\scriptstyle i$};
				\draw (5.5,2)--(5.5,0);
				\draw (7,5)--(7,4)--(6,3);
				\node at (9.5,2.5){=};
				\end{tikzpicture}
				\begin{tikzpicture}[scale=.3]
				\node at (-3,2.5){$\displaystyle{\sum^{\phantom{n+1}}_{a\in A(k+1,2)}}$};
				\draw (0,5)--(0,4)--(1,3);
				\draw (3,5)--(3,4)--(2,3);
				\node at (1.5,2.5){$\scriptstyle \langle a \rangle$};
				\draw (1,2)--(0,1)--(0,0);
				\draw (2,2)--(3,1)--(3,0);
				\end{tikzpicture}}
		\end{center}
		as desired. Now, we proceed by induction on $n$.
		\begin{center}
			\boxed{\begin{tikzpicture}[scale=.3]
				\draw (0.5,8)--(0.5,7)--(1.5,6);
				\draw (3.5,8)--(3.5,7)--(2.5,6);
				\node at (2,5.5){$\scriptstyle k$};
				\draw (2,5)--(2,3);
				\node at (2,2.5){$\scriptstyle n+1$};
				\draw (1,2)--(0,1)--(0,0);
				\draw (3,2)--(4,1)--(4,0);
				\node at (5.5,4){$\scriptstyle =$};
				\end{tikzpicture}
				\begin{tikzpicture}[scale=.3]
				\draw (0.5,8)--(0.5,7)--(1.5,6);
				\draw (3.5,8)--(3.5,7)--(2.5,6);
				\node at (2,5.5){$\scriptstyle k$};
				\draw (2,5)--(2,4)--(4,2)--(4,0);
				\draw (2,4)--(1,3);
				\node at (1,2.5){$\scriptstyle n$};
				\draw (0.5,2)--(-.5,1)--(-.5,0);
				\draw (1.5,2)--(2.5,1)--(2.5,0);
				\node at (6,4){$=$};
				\end{tikzpicture}
				\begin{tikzpicture}[scale=.3]
				\node at (-1.5,4){$\displaystyle{\sum_{i=0}^{k-1}}$};
				\draw (0,8)--(0,7)--(1,6);
				\draw (4,8)--(4,7)--(3,6);
				\node at (2,5.5){$\scriptstyle k-i-1$};
				\draw (2,5)--(2,3);
				\node at (2,2.5){$\scriptstyle n$};
				\draw (1.5,2)--(0.5,1)--(0.5,0);
				\draw (2.5,2)--(3.5,1)--(3.5,0);
				\draw (4,7)--(5,6);
				\draw (7,8)--(7,7)--(6,6);
				\node at (5.5,5.5){$\scriptstyle i$};
				\draw (5.5,5)--(5.5,0);
				\end{tikzpicture}}
		\end{center}
		where we used the case $n=2$ proven before. Now, by induction, the sum above equals
		\begin{center}
			\boxed{	\begin{tikzpicture}[scale=.3]
				\node at (-4,3.5){$\displaystyle{\sum_{i=0}^{k-1}\ \sum_{a\in A(k-i-1,n)}^{\phantom{k+1}}}$};
				\draw (0.5,6)--(0.5,5)--(1.5,4);
				\draw (3.5,6)--(3.5,5)--(2.5,4);
				\node at (2,3.5){$\scriptstyle \langle a \rangle$};
				\draw (3.5,5)--(4.5,4);
				\draw (6.5,6)--(6.5,5)--(5.5,4);
				\node at (5,3.5){$\scriptstyle i$};
				\draw (5,3)--(5,1);
				\draw (1.5,3)--(0.5,2)--(0.5,1);
				\draw (2.5,3)--(3.5,2)--(3.5,1);
				\node at (9,3.5){$=$};
				\end{tikzpicture}
				\begin{tikzpicture}[scale=.3]
				\node at (-3.5,3.5){$\displaystyle{\sum_{b\in A(k,n+1)}^{\phantom{k+1}}}$};
				\draw (0,6)--(0,5)--(1,4);
				\draw (3,6)--(3,5)--(2,4);
				\node at (1.5,3.4){$\scriptstyle \langle b \rangle$};
				\draw (1,3)--(0,2)--(0,1);
				\draw (2,3)--(3,2)--(3,1);
				\end{tikzpicture}}
		\end{center}
		where the assignment takes $a\in A(k-i-1,n)$ to $b\in A(k,n+1)$ with
		\begin{displaymath}
		b_j = 
		\begin{cases}
		\ \ \ a_j & \text{if } j < k-i-1 \\
		a_{k-i}+1 & \text{if } j = k-i-1 \\
		\ \ \ \; 0 & \text{if } j > k-i-1.
		\end{cases}
		\end{displaymath}
		Therefore, we concludes the proof of this lemma and of Theorem \ref{Isomorphism MS and Sur}.
	\end{proof}
	
	\subsection{Comparing $E_\infty$-coalgebra structures}
	
	Next we describe the natural $Sur$-coalgebra structure constructed by McClure-Smith and Berger-Fresse on the normalized chains of simplicial sets. We present it with $\F_2$-coefficients and refer to \cite{mcclure2003multivariable} and \cite{berger2004combinatorial} for their respective sign conventions. 
	
	As explained in Corollary \ref{simplicial set representations}, such family of structures is equivalently described by natural operad morphisms
	\begin{equation*}
	\phi[d]:Sur \to \End(\chains(\Delta^d)).
	\end{equation*}
	
	\begin{definition} \label{coaction of Sur}
		For every object $d$ in the simplex category and non-negative integer $m$. The map 
		$$\phi[d](m):Sur(m)\tensor\chains(\Delta^d)\to\chains(\Delta^d)^{\tensor m}$$
		is defined on basis elements as follows. For any $\sigma=[v_0,\dots,v_d]\in\chains(\Delta^d)$ and $s=(s(1),\dots,s(n))\in Sur(m)$ the element $\phi[d](m)\big(s\tensor[v_0,\dots,v_d]\big)$ is of the form 
		\begin{equation}\label{sum after iterated coproduct}
		\sum_I\sigma^1_I\tensor\cdots\tensor\sigma^m_I
		\end{equation}
		where the sum ranges over all sequences $I=\langle i_0,\dots,i_n\rangle$ such that $0=i_0\leq i_1\leq\cdots\leq i_{n-1}\leq i_n=d$, and each $\sigma^r_I$ is a face of $\sigma$ that we now define. For a fix sequence $I$ and $1\leq r\leq m$, let $\{j_1^r,\dots,j^r_k\}=s^{-1}(r)$; define $\sigma_I^r$ to be equal to either 
		\begin{equation}\label{after join}
		[v_{i_{j^r_{1}}-1},\dots,v_{i_{j^r_{1}}},v_{i_{j^r_{2}}-1},\dots,v_{i_{j^r_{2}}},\,\dots\,v_{i_{j^r_{k}}-1},\dots,v_{i_{j^r_{k}}}]
		\end{equation} if all vertices are distinct or to be $0$ if they are not.
	\end{definition}
	
	\begin{remark}
		The operad morphism $U(\varphi) : U(\S) \to \End(\chains(X))$ does not factor through $U(\MS)$. Examples can be constructed using the following observation: the image in $\End(\chains(\Delta^2))$ of the Leibniz element does not map $[0,2]\tensor[1]$ to $0$. This morphism, as we will show, does factor when restricted to the suboperad generated by surjection-like elements, a suboperad that maps surjectively onto $\MS$.
	\end{remark}
	
	\begin{theorem} \label{Representations of MS and Sur on simplicial sets}
		Let $U(\Sh)$ be the suboperad of $U(\S)$ generated by surjection-like elements. Then, for every simplicial set $X$, the following diagram commutes:
		\begin{center}
			\begin{tikzcd}
			U(\S)  \arrow[r, "U(\varphi)"] & \End(\chains(X)) \\
			U(\Sh) \arrow[u, hook] \arrow[d, two heads] &        \\
			U(\MS) \arrow[r, "\cong"] & Sur \arrow[uu, "\phi"']
			\end{tikzcd}
		\end{center}
	\end{theorem}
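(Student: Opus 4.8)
The plan is to reduce to representable simplicial sets, then to a generating set of $U(\Sh)$, and finally to a term-by-term comparison of two explicit formulas.

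By the naturality of the Kan extension in Corollary \ref{simplicial set representations} (see also \cite{may2003operads}), every element of $\chains(X)$ is a sum of pushforwards $f_\ast\sigma$ along simplices $f\colon\Delta^d\to X$, and the operad morphisms $U(\varphi)$ and $\phi$ are compatible with these $f_\ast$. Hence it suffices to prove that, for every object $d$ of the simplex category, the square obtained by replacing $\chains(X)$ with $\chains(\Delta^d)$ commutes; write $U(\varphi)[d]$ and $\phi[d]$ for the corresponding operad morphisms and let $\theta\colon U(\MS)\xrightarrow{\ \cong\ }Sur$ be the operad isomorphism of Theorem \ref{Isomorphism MS and Sur}. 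The two composites
$$U(\Sh)\hookrightarrow U(\S)\xrightarrow{\ U(\varphi)[d]\ }\End(\chains(\Delta^d))\qquad\text{and}\qquad U(\Sh)\twoheadrightarrow U(\MS)\xrightarrow{\ \theta\ }Sur\xrightarrow{\ \phi[d]\ }\End(\chains(\Delta^d))$$
are operad morphisms out of $U(\Sh)$, so it is enough to check that they agree on surjection-like elements, which generate $U(\Sh)$.

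Fix a surjection-like element $t\in\S(1,m)$ with associated surjection $s\colon\overline{n}\to\overline{m}$ and set $k_r=|s^{-1}(r)|$. By construction $t$ is the grafting, in the left comb order, of a single iterated coproduct with $n$ outputs, a permutation whose restriction to each fibre of $s$ is order preserving, and $m$ iterated products, the $r$-th one with $k_r$ inputs; hence $U(\varphi)[d](t)$ is the corresponding composite of iterated Alexander--Whitney diagonals, a permutation of tensor factors, and iterated join maps. As the Alexander--Whitney diagonal is coassociative, an induction on $n$ from its defining formula gives, for the iterated coproduct applied to $[v_0,\dots,v_q]$,
$$\sum_{0=i_0\le i_1\le\cdots\le i_n=q}[v_{i_0},\dots,v_{i_1}]\tensor\cdots\tensor[v_{i_{n-1}},\dots,v_{i_n}].$$
Permuting the factors and then applying, for each $r$, the iterated join along the factors indexed by $s^{-1}(r)=\{j^r_1<\cdots<j^r_{k_r}\}$, the order-preserving hypothesis ensures that these factors are joined in increasing order, so the $r$-th output is, up to sign, the concatenated face of $[v_0,\dots,v_q]$ recorded in \eqref{after join}, and it is $0$ precisely when two of the relevant vertices coincide. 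Comparing with \eqref{sum after iterated coproduct} and Definition \ref{coaction of Sur}, this is exactly $\phi[d](s)$ evaluated on $[v_0,\dots,v_q]$; since everything is over $\F_2$ as in Definition \ref{Sur}, the signs produced by the joins are irrelevant. In the degenerate case where $s$ takes equal consecutive values --- equivalently, where $t$ contains a copy of the involution graph $\involution$ --- both sides vanish: the join of two faces of a simplex sharing a vertex is zero, so $U(\varphi)[d](\involution)=0$ and hence $U(\varphi)[d](t)=0$, while $\theta$ sends the class of $t$ to $0$ by Lemma \ref{Basis of surjection-like elements} and $s=0$ in $Sur$. This establishes the identity on generators and finishes the proof.

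The step I expect to be the main obstacle is the combinatorial bookkeeping above: one has to identify the indexing set $\{0=i_0\le\cdots\le i_n=q\}$ of the iterated Alexander--Whitney map with the one parametrizing \eqref{sum after iterated coproduct}, use the order-preserving condition on the fibres of $s$ to see that the iterated joins assemble the blocks in the order prescribed by \eqref{after join}, and match the two vanishing conditions --- ``$0$ when two vertices coincide'' for the join against the submodule quotiented out in Definition \ref{Sur} --- taking particular care with the involutive degeneration, where it is essential that $\product\circ\coproduct$ acts as $0$ on chains. Once the conventions are aligned, the verification is a direct inspection over the sequences $I$.
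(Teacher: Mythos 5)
Your proof is correct, and its computational core --- writing a surjection-like element as an iterated Alexander--Whitney diagonal followed, on each fibre of $s$, by an iterated join, and matching the resulting sum over sequences $I$ with Definition \ref{coaction of Sur} --- is exactly the second half of the paper's argument. Where you genuinely diverge is in the reduction that precedes it. The paper first checks, relation by relation, that the images under $U(\varphi)$ of elements of $U(\Sh)$ satisfy the defining relations of $\MS$ (associativity of the join over $\F_2$, coassociativity of Alexander--Whitney, vanishing of the involution element, and the Leibniz identity on suitably ordered inputs), thereby establishing that $U(\varphi)|_{U(\Sh)}$ factors through $U(\MS)$ before comparing the induced map with $\phi\circ\theta$ on the basis of surjection-like elements. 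You instead note that both composites around the square are operad morphisms out of $U(\Sh)$, so their agreement locus is a differential suboperad and it suffices to compare them on the generating surjection-like elements; the factorization through $U(\MS)$ then comes for free. This is a legitimate and arguably cleaner organization: it sidesteps the most delicate point of the paper's first half, namely that the Leibniz element is \emph{not} sent to zero on arbitrary inputs (see the remark preceding the theorem) and only vanishes on the ordered tensors arising inside a surjection-like evaluation. What the paper's route buys in exchange is an explicit identification of which $\MS$-relations hold in $\End(\chains(\Delta^d))$ and why, which has independent interest. One small remark: your separate treatment of the degenerate case is already subsumed by your general formula, since if $s(j)=s(j+1)=r$ then the $r$-th join contains two consecutive blocks sharing the vertex $v_{i_j}$ and every term of the sum vanishes; the ``hence $U(\varphi)[d](t)=0$'' step is cleanest read this way, rather than by trying to factor $t$ through the involution element inside $U(\S)$, where the needed re-association is not available.
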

	
	\begin{proof}
		We start by verifying that $U(\varphi) : U(\S) \to \End(\chains(X))$ factor through $U(\MS)$ when restricted to $U(\Sh)$. The fact that the join map is associative with $\F_2$-coefficients follows from the associativity of union of sets. The Alexander-Whitney map being coassociative is a classical result that can be easily verified. To see that the involution element is sent to the 0 map, we recall that the Alexander-Whitney map of any simplex is a sum of tensor pairs of simplices that share a vertex, and that the join of two simplices that contain a common vertex equals $0$. To verify that the Leibniz element is sent to the map that takes any $[v_0,\dots,v_p]\tensor[w_0,\dots,w_q]$ to $0$, we notice that the order preserving condition imposed surjection-like elements allows us to assume $v_i\leq w_j$ for all $i$ and $j$. Therefore, the images of both
		\begin{center}
			\boxed{\begin{tikzpicture}[scale=.2]
				\draw (0,4)--(1,3)--(1,1)--(0,0);
				\draw (2,4)--(1,3);
				\draw (1,1)--(2,0);
				\node at (6,2){ \& };
				\end{tikzpicture}
				\ \ \ 
				\begin{tikzpicture}[scale=.2]
				\draw (5,4)--(5,3)--(4,2)--(4,0);
				\draw (5,3)--(7,1)--(7,0);
				\draw (8,4)--(8,2)--(7,1)--(7,0);
				\node at (9,2){$\scriptstyle +$};
				
				\draw (10,4)--(10,2)--(11,1)--(11,0);
				\draw (13,4)--(13,3)--(11,1);
				\draw (13,3)--(14,2)--(14,0);
				
				\node at (10,2){};
				\end{tikzpicture}}
		\end{center}
		send $[v_0,\dots,v_p]\tensor[w_0,\dots,w_q]$ to  
		\begin{equation*}
		\sum_{i=0}^p[v_0,\dots,v_i]\tensor[v_i,\dots,v_p,w_0,\dots,w_q]+\sum_{j=0}^q[v_0,,\dots,v_p,w_0,\dots,w_j]\tensor[w_j,\dots,w_q],
		\end{equation*}
		which concludes the verification.
		
		We now show that the diagram is commutative. Let $s\in Sur(m)$ be a surjection of degree $(n-m)$ and for every $r = 1,\dots,m$ let 
		\begin{equation*}
		s^{-1}(r) = \{i^r_1,\dots,i^r_{k_r}\}.
		\end{equation*}
		Then, the surjection-like element whose associated surjection is $s$ factors as follows:
		\begin{center}
			\boxed{\begin{tikzpicture}[scale=.8]
				\draw (0,0)--(0,-.6) node[below]{$\scriptstyle 1$};
				\draw (0,0)--(.5,.5);
				\draw (0,0)-- (-.2,.5) node[above]{\ \ \ $\scriptstyle1\ 2\  ...\  k_1$};
				\draw (-.5,.5)--(0,0);
				
				\node at (1,0){$\scriptstyle\cdots$};
				\node at (1,-.9){$\scriptstyle\cdots$};
				
				\draw (2,0)--(2,-.68) node[below]{$\scriptstyle m$};
				\draw (2,0)--(2.5,.5);
				\draw (2,0)-- (1.8,.5) node[above]{\ \ \ $\scriptstyle 1\ 2\  ...\  k_m$};
				\draw (1.5,.5)--(2,0);
				
				\draw (1,2.5)--(1,3);
				\draw (1,2.5)--(0,2) node[below]{$\scriptstyle 1$};
				\draw (1,2.5)--(.5,2) node[below]{$\scriptstyle 2$};
				\draw (1,2.5)--(2,1.95) node[below]{$\scriptstyle n$};
				\node at (1.2,1.75){$\scriptstyle ...$};
				\node at (3.8,1){=};
				\end{tikzpicture}
				\begin{tikzpicture}[scale=1.2]
				\draw (0,1)--(0,.4) node[below]{$\scriptstyle 1$};
				\draw (0,1)--(.5,1.5);
				\draw (0,1)-- (-.2,1.5) node[above]{\ \ \ $\scriptstyle i_1^1\ i_2^1\ \ ...\ \ i_{k_1}^1$};
				\draw (-.5,1.5)--(0,1);

				\node at (1,1){$\scriptstyle\cdots$};
				\node at (1,.2){$\scriptstyle\cdots$};
				
				\draw (2,1)--(2,.32) node[below]{$\scriptstyle m$};
				\draw (2,1)--(2.5,1.5);
				\draw (2,1)-- (1.8,1.5) node[above]{\ \ \ \ $\scriptstyle i_1^m\ i_2^m\ \, ...\ i_{k_m}^m$};
				\draw (1.5,1.5)--(2,1);
				
				\node at (2,-.25){};
				\node at (3.3,1){$\circ$};
				
				\draw (5,1)--(5,1.4);
				\draw (5,1)--(4,.6) node[below]{$\scriptstyle 1$};
				\draw (5,1)--(4.5,.6) node[below]{$\scriptstyle 2$};
				\draw (5,1)--(6,.6) node[below]{$\scriptstyle n$};
				\node at (5.2,.45){$\scriptstyle ...$};
				\end{tikzpicture}}
		\end{center}		 
		Recall from Theorem \ref{chain representations} that \par
		\begin{center}
			\boxed{\begin{tikzpicture}[scale=.8]
				\draw (1,2.5)--(1,3);
				\draw (1,2.5)--(0,2) node[below]{$\scriptstyle 1$};
				\draw (1,2.5)--(.5,2) node[below]{$\scriptstyle 2$};
				\draw (1,2.5)--(2,1.95) node[below]{$\scriptstyle n$};
				\node at (1.2,1.75){$\scriptstyle ...$};
				\node at (2.9,2.35){$[v_0,\dots,v_d]$};
				\node at (9.7,2.2){$\displaystyle =\  \sum_I\,[v_{i_0},\dots,v_{i_1}]\tensor[v_{i_1},\dots,v_{i_2}]\tensor\cdots\tensor[v_{i_{n-1}},\dots,v_{i_n}]$};
				\end{tikzpicture}}
		\end{center} 
		where the sum ranges over all sequences $0=i_0\leq i_1\leq\cdots\leq i_{n-1}\leq i_n=d$. 
		
		If we then apply \par
		\begin{center}
			\boxed{\begin{tikzpicture}[scale=1]
				\draw (0,1)--(0,.4) node[below]{$\scriptstyle 1$};
				\draw (0,1)--(.5,1.5);
				\draw (0,1)-- (-.2,1.5) node[above]{\ \ \ $\scriptstyle i_1^1\ i_2^1\ \ ...\ \ i_{k_1}^1$};
				\draw (-.5,1.5)--(0,1);

				\node at (1,1){$\scriptstyle\cdots$};
				\node at (1,.2){$\scriptstyle\cdots$};
				
				\draw (2,1)--(2,.32) node[below]{$\scriptstyle m$};
				\draw (2,1)--(2.5,1.5);
				\draw (2,1)-- (1.8,1.5) node[above]{\ \ \ \ $\scriptstyle i_1^m\ i_2^m\ \, ...\ i_{k_m}^m$};
				\draw (1.5,1.5)--(2,1);
				\end{tikzpicture}}
		\end{center}
		we obtain
		\begin{equation*}
		\sum_I\sigma^1_I\tensor\cdots\tensor\sigma^m_I
		\end{equation*}
		with $\sigma_I^r$ agreeing with (\ref{after join}) in Definition \ref{coaction of Sur}. This concludes the proof.
	\end{proof}
	
	\section{The augmented simplex category and the prop $\S$}
	In this appendix we describe a family, natural with respect to augmented simplicial maps, of opposite $\S$-bialgebra structures on the chains of the standard augmented simplices, i.e., natural prop morphisms $$\big\{\S\to\End^{\mathrm{op}}(\chains(\Delta_+^d))\big\}_{d\in\Delta_+\,.}$$	
	The image of the three generators of $\S$ are the join map, the empty simplex, and the Alexander-Whitney map. Comparing with the family of $\S$-coalgebra structures on the chains of the standard simplices described in Section 4, we see a duality between diagonal approximations and join operations in the usual and augmented simplicial contexts.
	
	\begin{notation}
	    Consider the augmented simplex category $\Delta_+$ whose objects are the sets $$\bar 0=\emptyset,\ \bar 1=\{1\},\ \bar 2=\{1,2\},\ \dots$$ and whose morphisms are order preserving maps. The normalized chain complex functor is defined analogously to the non-augmented case. It satisfies $\partial [v]=[\emptyset]$ for all vertices and it is graded by cardinality instead of dimension. The functor of normalized cochains is defined by postcomposing with the functor of linear duality. We respectively denote the images of $\Hom_{\Delta_+}(-,\bar{d})$ by $\chains^a(\Delta_+^d)$ and $\cochains_a(\Delta_+^d)$.
	\end{notation}	
	
	\begin{theorem}\label{augmented simplicial sets representation}
		For every object $\bar{d}$ of the augmented simplex category, the following assignment of generators defines an opposite $\S$-bialgebra $\S\to\End(\chains^a(\Delta^d_+))$ natural with respect to augmented simplicial maps: \par
		\ \ \ \ $\counit\in\Hom(k,\chains^a(\Delta^d_+))_0$ is defined by
		$$\counit\,(1)=[\emptyset].$$
		\ \ \ $\coproduct\in\Hom(\chains^a(\Delta^d_+)^{\otimes2},\chains^a(\Delta^d_+))_0$ is defined by
		$$\coproduct\ \big(\left[v_0,\dots,v_p \right] \otimes\left[v_{p+1},\dots,v_q\right]\big)=\begin{cases}\text{sign}(\pi)\cdot\left[v_{\pi(0)},\dots,v_{\pi(q)}\right] & \text{ if } i\neq j \text{ implies } v_i\neq v_j \\ \hspace*{1.15cm}
		0 & \text{ if not.} \end{cases}$$
		\ \ \ where $\pi$ is the permutation that orders the totally ordered set of vertices.\par
		\ \ \ $\product\in\Hom(\chains^a(\Delta^d_+),\chains^a(\Delta^d_+)^{\otimes2})_1$ is defined by
		$$\product\ [v_0,\dots,v_q]=\sum_{i=0}^q(-1)^i[v_0,\dots,v_i]\otimes[v_i,\dots,v_q].$$ 
		\begin{proof}
			Throughout this proof we identify\ \counit\ , \coproduct \ and \product \ with their images in $\End(\chains^a(\Delta^d_+))$. We need to verify that the relations of $\S$ are satisfied by these images.
			
			The fact that the maps \leftcounitality\ and\ \rightcounitality\ are equal to the zero map follows from the fact that the empty set is the unit for the union of sets.
			The formula defining \product\ as well as the fact that it has degree $1$ makes the definition of\ \productcounit\ as the zero map the only sensible one.  
			In order to establish $\partial\ \product=\ \boundary$\;, we must consider two cases for the basis element to which the map is applied:
			\begin{enumerate}[leftmargin=.6cm]
				\item Degree $0$ \par
				\begin{center}
					\begin{tikzcd}
						& {[\emptyset]} \arrow[swap, dl, maps to, "\product", out=180, in=90] \arrow[d, maps to, "\boundary\ ", swap] \arrow[dr, maps to, "\displaystyle{\partial}", in=90, out=0]  &  \\
						{0} \arrow[dr, |- , "\displaystyle{\partial}", out=-90, in=180, swap] & 
						{0}  & 
						{0}  \arrow[swap, dl, |- , "\product",out=-90, in=0, swap]\\
						& + \uar &
					\end{tikzcd}
				\end{center}
				
				\item Degree greater than $0$\par 		
				\begin{center}
					\begin{tikzcd}[column sep=-25]
						& {[v_0,\dots,v_p]} \arrow[swap, ddl, maps to, "\product", out=180, in=90] \arrow[d, maps to, "\boundary\ ", swap] \arrow[ddr, maps to, "\displaystyle{\partial}", in=90, out=0]  &	[-10pt]  \\
						& {[\emptyset]\tensor[v_0,\dots,v_p]-[v_0,\dots,v_p]\tensor[\emptyset]}  & \\
						{\sum_i (-1)^i [v_0,\dots,v_i]\tensor[v_i\dots,v_p]} \arrow[r, |- , "\displaystyle{\partial}", out=-90, in=225]
						& + \uar &
						{\sum_{i}(-1)^{i}[v_0,\dots,\hat{v}_i,\dots,v_p]}  \arrow[l, |- , "\product", out=-90, in=-45,swap]  
					\end{tikzcd}
				\end{center}
			\end{enumerate}
			
			\item In order to establish $\partial\ \coproduct=0$, we must consider four cases for the basis elements to which the map is applied: 
			\begin{enumerate}[leftmargin=.6cm]
				\item Both of degree $0$ \par
				\begin{center}
					\begin{tikzcd}
						& {[\emptyset]\tensor[\emptyset]} \arrow[swap, dl, maps to, "\coproduct", out=180, in=90] \arrow[d, maps to] \arrow[dr, maps to, "\displaystyle{\partial}", in=90, out=0]  &  \\
						{0} \arrow[dr, |- , "\displaystyle{\partial}", out=-90, in=180, swap] & 
						{0}  & 
						{0}  \arrow[swap, dl, |- , "\coproduct",out=-90, in=0, swap]\\
						& - \uar &
					\end{tikzcd}
				\end{center}
				
				\item Only one of degree $0$\par 		
				\begin{center}
					\begin{tikzcd}[column sep=-15pt]
						& {[\emptyset]\tensor[v_0,\dots,v_p]} \arrow[swap, dl, maps to, "\coproduct", out=180, in=90] \arrow[d, maps to] \arrow[dr, maps to, "\displaystyle{\partial}", in=90, out=0]  &	[-10pt]  \\
						{[v_0,\dots,v_p]} \arrow[dr, |- , "\displaystyle{\partial}", out=-90, in=190,swap] &
						{0}  &
						{\sum_{i}(-1)^{i}[\emptyset]\tensor[v_0,\dots,\hat{v}_i,\dots,v_p]}  \arrow[dl, |- , "\coproduct", out=-90, in=-15, swap]  \\
						& - \uar &
					\end{tikzcd}
				\end{center}
				
				\item None of degree $0$ and sharing a vertex\par
				\begin{center}
					\begin{tikzcd}[column sep=tiny]
						& 	{[v_0,\dots,v_p]\tensor[w_0,\dots,w_q]} \arrow[swap, dl, maps to, "\coproduct",out=180, in=90] \arrow[swap, d, maps to ] \arrow[dr, maps to, "\displaystyle{\partial}",in=90, out=0]  & 	[-45pt] \\
						{0} \arrow[dr, |- , "\displaystyle{\partial}", out=-90, in=195, swap]& 
						{0}  &
						\begingroup	\renewcommand*{\arraystretch}{1.5}
						\begin{matrix}\sum_{i}(-1)^{i}[v_0,\dots,\hat v_i,\dots,v_p]\tensor[w_0,\dots,w_q]\ + \\(-1)^{p+1}\sum_{j}(-1)^{j}[v_0,\dots,v_p]\tensor[w_0,\dots,\hat{w}_j,\dots,w_q]	\end{matrix}
						\endgroup 	\arrow[dl, |- , "\coproduct", out=-90, in=-15, swap] \\
						& - \uar &
					\end{tikzcd}
				\end{center}
				
				\item None of degree $0$ and not sharing a vertex\par
				\begin{center}
					\begin{tikzcd}[column sep=-45pt]
						& {[v_0,\dots,v_p]\tensor[v_{p+1},\dots,v_q]} \arrow[swap, dl, maps to, "\coproduct", out=180, in=90] \arrow[d, maps to] \arrow[dr, maps to, "\displaystyle{\partial}", in=90, out=0]  & [-0pt]  \\
						{\begin{matrix}	sign(\pi)\ \cdot [v_{\pi(0)},\dots,v_{\pi(q)}]\end{matrix}}  \arrow[dr, |- , "\displaystyle{\partial}", out=-90, in=195, swap] & 
						{0}  & 
						\begingroup
						\renewcommand*{\arraystretch}{1.5}
						\begin{matrix}
							\sum_{i}(-1)^{i}[v_0,\dots,\hat v_i,\dots,v_p]\tensor[v_{p+1},\dots,v_q]\ + \\ 
							\sum_{i}(-1)^{p+1+i}[v_0,\dots,v_p]\tensor[v_{p+1},\dots,\hat{v}_i,\dots,v_q]
						\end{matrix}
						\endgroup \arrow[dl, |- , "\coproduct", out=-90, in=-15, swap] \\
						& - \uar &
					\end{tikzcd}
				\end{center}
			\end{enumerate}
		\end{proof}
		
	\end{theorem}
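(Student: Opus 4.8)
\emph{Proof proposal.} The plan is to use that an opposite $\S$-bialgebra structure on a complex $C$ is exactly a prop morphism $\S\to\End^{\mathrm{op}}(C)$, and that, since $\S$ is presented by the generators and relations of Definition~\ref{Prop S}, such a morphism is the same data as an assignment of images to the three generators subject to finitely many checks: the images must sit in the prescribed biarities and homological degrees of $\End^{\mathrm{op}}(\chains^a(\Delta_+^d))$, they must be compatible with the differentials (equivalently, after applying the morphism the relations $\partial\ \counit=0$, $\partial\ \coproduct=0$, and $\partial\ \product=\ \boundary$ must hold), and they must satisfy $\productcounit$, $\leftcounitality$, $\rightcounitality$. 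Naturality with respect to augmented simplicial maps I would dispose of at the very end in one line: each of the three assigned maps is given by a formula on ordered vertex sets that is manifestly compatible with order-preserving maps of vertices, hence with face and degeneracy operators, so the whole family of prop morphisms is natural.

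First I would clear the cheap verifications. Counting cardinalities (the augmented complex is graded by cardinality) shows the degrees are as claimed: $[\emptyset]$ has cardinality $0$; the join of two simplices on disjoint vertex sets has the same total cardinality as the input, so degree $0$; and each summand $[v_0,\dots,v_i]\tensor[v_i,\dots,v_q]$ of the Alexander--Whitney--type map has cardinality one more than $[v_0,\dots,v_q]$, so degree $1$. The relations $\leftcounitality$ and $\rightcounitality$ hold because $\emptyset$ is a two-sided unit for the union of vertex sets, hence for the join. The relation $\productcounit$ becomes the zero map because $\product$ has degree $1$ while the only degree-$0$ chain is $[\emptyset]$, which the formula sends to $0$. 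And $\partial\ \counit=0$ because $\partial[\emptyset]=0$ in the augmented complex.

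The substance is the pair of identities $\partial\ \product=\ \boundary$ and $\partial\ \coproduct=0$, which I would establish by a case analysis using the $\Hom$-complex differential $(\partial f)(x)=\partial(f(x))-(-1)^{|f|}f(\partial x)$, exactly as in the proof of Theorem~\ref{chain representations} but with the roles of product and coproduct interchanged and the cardinality grading in force. For $\partial\ \product=\ \boundary$ there are two cases: the argument $[\emptyset]$, where both sides vanish, and a general $[v_0,\dots,v_p]$, where the computation is the classical statement that the Alexander--Whitney map is compatible with $\boundary$, now with the shifted grading. For $\partial\ \coproduct=0$ there are four cases, according to whether each of the two argument simplices has positive cardinality and whether they share a vertex: if a common vertex occurs then both sides are $0$ (the join vanishes, and so does every term of its boundary); if one argument is $[\emptyset]$ then it acts as a unit and the identity collapses; and the essential case is that of two disjoint simplices of positive cardinality, where the claim is that the shuffled simplex $\mathrm{sign}(\pi)\cdot[v_{\pi(0)},\dots,v_{\pi(q)}]$ is a cycle relative to face deletions in the two factors. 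I would present each case as a small commuting square linking $\product$ or $\coproduct$, $\boundary$, and the two halves of the $\Hom$-differential, as in Theorem~\ref{chain representations}.

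The only real obstacle is sign bookkeeping, and it is concentrated in the disjoint case of $\partial\ \coproduct=0$: one must track how $\mathrm{sign}(\pi)$ changes when a vertex is removed from the first versus the second factor, and reconcile this with the Koszul sign $(-1)^{|f|}$ and with the $(-1)^{p+1}$ that enters the boundary of a tensor product in the cardinality grading. Once conventions are pinned down (cardinality grading, the Koszul rule, the formula for $\boundary$) the remaining cases are mechanical. As a consistency check, and an alternative route, the statement is the precise dual of Theorem~\ref{chain representations} under the duality between the augmented and non-augmented simplicial contexts, so the signs appearing here are forced to agree with those there.
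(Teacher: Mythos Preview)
Your proposal follows the paper's proof essentially step for step: dispose of the counitality relations via the unit property of $\emptyset$, kill $\productcounit$ on degree grounds, and then run the same case analysis (two cases for $\partial\,\product=\boundary$, four for $\partial\,\coproduct=0$) using the $\Hom$-complex differential.

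One point deserves correction. In the case of $\partial\,\coproduct=0$ where both factors have positive cardinality and share a vertex, you write that ``the join vanishes, and so does every term of its boundary.'' The first half is fine and handles $\partial(\coproduct(x))$, but the second half is not how the other piece $\coproduct(\partial x)$ works. If the two simplices share exactly one vertex $v$, then the face that deletes $v$ from the first factor and the face that deletes $v$ from the second factor each produce a pair of \emph{disjoint} simplices, so their joins are nonzero; it is a sign cancellation between these two surviving terms (and more generally a pairing argument when several vertices are shared) that makes $\coproduct(\partial x)$ vanish. So the sign bookkeeping is not concentrated solely in the disjoint case as you suggest --- the shared-vertex case also requires it. The paper's diagram for this case records exactly this cancellation without spelling it out; you should expect to do the same computation there that you anticipate for the disjoint case.
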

	
	\begin{remark}
		Adamaszek and Jones have also explored a relationship between the higher joins and the Steenrod diagonal in \cite{adamaszek2013symmetric}.
	\end{remark}
	
	\bibliography{medina2018algebraic}{}

\begin{thebibliography}{MM19b}

\bibitem[AJ13]{adamaszek2013symmetric}
Micha{\l} Adamaszek and John~DS Jones.
\newblock The symmetric join operad.
\newblock {\em Homology, Homotopy and Applications}, 15(2):245--265, 2013.

\bibitem[BF04]{berger2004combinatorial}
Clemens Berger and Benoit Fresse.
\newblock Combinatorial operad actions on cochains.
\newblock In {\em Mathematical Proceedings of the Cambridge Philosophical
  Society}, volume 137, pages 135--174. Cambridge University Press, 2004.

\bibitem[BV06]{boardman2006homotopy}
John~Michael Boardman and Rainer~M Vogt.
\newblock {\em Homotopy invariant algebraic structures on topological spaces},
  volume 347.
\newblock Springer, 2006.

\bibitem[DK10]{dotsenko2010grobner}
Vladimir Dotsenko and Anton Khoroshkin.
\newblock Gr{\"o}bner bases for operads.
\newblock {\em Duke Mathematical Journal}, 153(2):363--396, 2010.

\bibitem[GH09]{goresky09loop}
Mark Goresky and Nancy Hingston.
\newblock Loop products and closed geodesics.
\newblock {\em Duke Math. J.}, 150(1):117--209, 2009.

\bibitem[Kau07]{kaufmann07spinless}
Ralph~M. Kaufmann.
\newblock On spineless cacti, {D}eligne's conjecture and {C}onnes-{K}reimer's
  {H}opf algebra.
\newblock {\em Topology}, 46(1):39--88, 2007.

\bibitem[Kau08]{kaufmann08frobenious}
Ralph~M. Kaufmann.
\newblock Moduli space actions on the {H}ochschild co-chains of a {F}robenius
  algebra. {II}. {C}orrelators.
\newblock {\em J. Noncommut. Geom.}, 2(3):283--332, 2008.

\bibitem[Kau09]{kaufmann09dimension}
Ralph~M. Kaufmann.
\newblock Dimension vs. genus: a surface realization of the little {$k$}-cubes
  and an {$E_\infty$} operad.
\newblock In {\em Algebraic topology---old and new}, volume~85 of {\em Banach
  Center Publ.}, pages 241--274. Polish Acad. Sci. Inst. Math., Warsaw, 2009.

\bibitem[Kau18]{kaufmann2018detailed}
Ralph~M Kaufmann.
\newblock A detailed look on actions on hochschild complexes especially the
  degree $1$ co-product and actions on loop spaces.
\newblock {\em arXiv preprint arXiv:1807.10534}, 2018.

\bibitem[KLP03]{kaufmann03arc}
Ralph~M. Kaufmann, Muriel Livernet, and R.~C. Penner.
\newblock Arc operads and arc algebras.
\newblock {\em Geom. Topol.}, 7:511--568, 2003.

\bibitem[KM95]{kriz1995operads}
Igor Kriz and J~Peter May.
\newblock {\em Operads, algebras, modules and motives}.
\newblock Soci{\'e}t{\'e} math{\'e}matique de France, 1995.

\bibitem[LV12]{loday2012algebraic}
Jean-Louis Loday and Bruno Vallette.
\newblock {\em Algebraic operads}, volume 346 of {\em Grundlehren der
  Mathematischen Wissenschaften [Fundamental Principles of Mathematical
  Sciences]}.
\newblock Springer, Heidelberg, 2012.

\bibitem[Mar08]{markl2008operads}
Martin Markl.
\newblock Operads and props.
\newblock {\em Handbook of algebra}, 5:87--140, 2008.

\bibitem[May03]{may2003operads}
JP~May.
\newblock Operads and sheaf cohomology.
\newblock {\em Preprint, December}, 2003.

\bibitem[MM18a]{medina2018axiomatic}
Anibal~M. Medina-Mardones.
\newblock An axiomatic characterization of {S}teenrod's cup-$i$ products.
\newblock {\em arXiv preprint arXiv:1810.06505}, 2018.

\bibitem[MM18b]{medina2018cellular}
Anibal~M. Medina-Mardones.
\newblock A finitely presented ${E}_{\infty}$-prop {II}: cellular context.
\newblock {\em arXiv preprint arXiv:1808.07132}, 2018.

\bibitem[MM18c]{medina2018persistence}
Anibal~M. Medina-Mardones.
\newblock Persistence {S}teenrod modules.
\newblock {\em arXiv preprint arXiv:1812.05031}, 2018.

\bibitem[MM19a]{medina2019globular}
Anibal~M. Medina-Mardones.
\newblock An algebraic representation of globular sets.
\newblock {\em arXiv preprint arXiv:1906.01011}, 2019.

\bibitem[MM19b]{medina2019effective}
Anibal~M. Medina-Mardones.
\newblock An effective proof of the {C}artan formula.
\newblock {\em arXiv preprint arXiv:1907.12113}, 2019.

\bibitem[MS03]{mcclure2003multivariable}
James McClure and Jeffrey Smith.
\newblock Multivariable cochain operations and little $n$-cubes.
\newblock {\em Journal of the American Mathematical Society}, 16(3):681--704,
  2003.

\bibitem[TZ07]{tradler07string}
Thomas Tradler and Mahmoud Zeinalian.
\newblock Algebraic string operations.
\newblock {\em $K$-Theory}, 38(1):59--82, 2007.

\end{thebibliography}
	\bibliographystyle{alpha}
	
\end{document}